\newtheorem{theorem}{Theorem}
\newtheorem{proposition}{Proposition}
\newtheorem{lemma}{Lemma}
\renewcommand{\phi}{\varphi}
\renewcommand{\P}{\mathbb{P}}
\newcommand{\E}{\mathbb{E}}
\newcommand{\N}{\mathbb{N}}
\newcommand{\R}{\mathbb{R}}
\newcommand{\cL}{\mathcal{L}}
\newcommand{\cT}{\mathcal{T}}
\def\ds1{\mathds{1}}
\renewcommand{\epsilon}{\varepsilon}
\newlength{\minipagewidth}
\newcommand{\beq}{\begin{equation}}
\newcommand{\eeq}{\end{equation}}
\newcommand{\beqa}{\begin{eqnarray}}
\newcommand{\eeqa}{\end{eqnarray}}
\newcommand{\beqan}{\begin{eqnarray*}}
\newcommand{\eeqan}{\end{eqnarray*}}
\def\ba#1\ea{\begin{align*}#1\end{align*}} 
\def\banum#1\eanum{\begin{align}#1\end{align}} 
\newcommand{\Ga}{\mathrm{Ga}}
\newcommand{\PA}{\mathrm{PA}}
\newcommand{\UA}{\mathrm{UA}}
\newcommand{\BlackBox}{\rule{1.5ex}{1.5ex}}  
\newenvironment{proof}{\par\noindent{\bf Proof\ }}{\hfill\BlackBox\\[2mm]}
\begin{document}
\title{Finding Adam in random growing trees}
\author{
	S{\'e}bastien Bubeck
	\thanks{Microsoft Research and Princeton University; \texttt{sebubeck@microsoft.com}.}
	\and
	Luc Devroye
	\thanks{McGill University; \texttt{lucdevroye@gmail.com}.}
	\and
	G\'abor Lugosi
	\thanks{ICREA and Pompeu Fabra University; \texttt{gabor.lugosi@upf.edu}. GL acknowledges support by the Spanish Ministry of Science and Technology grant MTM2012-37195.}
}
\date{\today}

\maketitle

\begin{abstract}
We investigate algorithms to find the first vertex in large trees generated by either the uniform attachment or preferential attachment model. We require the algorithm to output a set of $K$ vertices, such that, with probability at least $1-\epsilon$, the first vertex is in this set. We show that for any $\epsilon$, there exist such algorithms with $K$ independent of the size of the input tree. Moreover, we provide almost tight bounds for the best value of $K$ as a function of $\epsilon$. In the uniform attachment case we show that the optimal $K$ is subpolynomial in $1/\epsilon$, and that it has to be at least superpolylogarithmic. On the other hand, the preferential attachment case is exponentially harder, as we prove that the best $K$ is polynomial in $1/\epsilon$. We conclude the paper with several open problems.
\end{abstract}

\section{Introduction}
We consider one of the simplest models of a randomly growing graph: starting from a single node (referred to as the {\em root}), each arriving new node connects uniformly at random to one of the existing nodes. 
We are interested in the following question: given a large tree generated from this uniform attachment model, is it possible to find a small set of vertices for which we can certify with high probability that the root is in this set? Possible applications include finding the center of an epidemic, or the origin of a rumor. 

In this paper we study {\em root-finding algorithms}: given a target accuracy $\epsilon \in (0,1)$ and a tree $T$ of size $n$ (for some $n \in \N$), a root-finding algorithm outputs a set $H(T,\epsilon)$ of $K(\epsilon)$ vertices, such that, with probability at least $1-\epsilon$ (with respect to the random generation of $T$ from the uniform attachment model), the root is in $H(T,\epsilon)$. An important aspect of the definition is that the size of the output set is allowed to depend on $\epsilon$, but not on the size $n$ of the input tree. Thus it is not obvious that root-finding algorithms exist at all. For instance a naive guess would be to output vertices of large degrees (indeed, the older a vertex, the larger is its expected degree), but for this to be correct with some constant probability one needs to output a logarithmic in $n$ number of vertices. One of the main contributions of this paper is to show that root-finding algorithms indeed exist. Furthermore, we almost tightly characterize the best possible value for $K(\epsilon)$, by showing that it can be subpolynomial in $1/\epsilon$, and that it has to be superpolylogarithmic. More precisely the core of our contribution can be summarized by the following theorem.

\begin{theorem} \label{th:mainua}
There exist constants $c, c'>0$ such that the following holds true in the uniform attachment model. Any root-finding algorithm must satisfy $K(\epsilon) \geq \exp\left( c \sqrt{\log(1/\epsilon)}\right)$. Furthermore, there exists a polynomial time root-finding algorithm with $K(\epsilon) \leq \exp\left(c' \frac{\log(1/\epsilon)}{\log\log(1/\epsilon)}\right)$.
\end{theorem}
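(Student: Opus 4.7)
I would prove the upper bound via the maximum-likelihood estimator for the root. By the hook-length identity for uniform random recursive trees, $\P(\mathrm{root}=v\mid T)$ is proportional to $1/\phi(v)$, where $\phi(v):=\prod_{u\neq v}s_v(u)$ and $s_v(u)$ is the size of the subtree rooted at $u$ when $T$ is re-rooted at $v$. The algorithm outputs the $K$ vertices of smallest $\phi$ and is computable in polynomial time. The analysis decouples into two steps. \emph{Probabilistic tail:} I would show $\P\bigl(\phi(\mathrm{root})/\min_v\phi(v)>R\bigr)\lesssim R^{-\alpha}$ for some $\alpha>0$. Using the adjacency identity $\phi(v')/\phi(v)=s_v(v')/(n-s_v(v'))$, this ratio telescopes along the root-to-centroid chain to $\prod_j a_j/(n-a_j)$ where each $a_j/n$ is a P\'olya-urn Beta ratio; the tail then follows from a log-martingale or direct concentration estimate. \emph{Deterministic counting:} I would show $|\{v:\phi(v)\le R\cdot\min_u\phi(u)\}|\leq\exp(C\log R/\log\log R)$ for any tree. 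Again invoking the adjacency identity, each step away from the $\phi$-minimizing centroid multiplies $\phi$ by a factor controlled by the local subtree-size profile; bounding how many steps can be taken at cumulative ratio $\le R$ reduces to a convex-optimization problem whose value is $\exp(C\log R/\log\log R)$. Taking $R\asymp\epsilon^{-1/\alpha}$ and combining the two estimates delivers $K(\epsilon)\leq\exp(c'\log(1/\epsilon)/\log\log(1/\epsilon))$.

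For the lower bound I would exhibit, with constant probability over $T$, a large ``likelihood-symmetric'' class of root candidates. Consider the principal chain $v_0=\mathrm{root},v_1,v_2,\dots$ obtained by greedy descent into the largest child subtree. Using the P\'olya-urn representation of the root's child-subtree sizes (applied recursively), the posterior ratio $\P(\mathrm{root}=v_j\mid T)/\P(\mathrm{root}=v_0\mid T)$ decays as $\exp(-\Theta(j^2))$ along the chain: each step costs a factor $a_j/(n-a_j)$ in $\phi$, and the typical distribution of $a_j/n$ in UA makes these factors compound super-geometrically. At depth $j$ there are moreover $\exp(\Theta(j))$ ``cousin'' candidates reachable from $v_j$ by exchanging UA timestamps of comparably-aged vertices along the chain, each with posterior of the same order as $v_j$'s. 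For the algorithm to succeed with probability $\ge 1-\epsilon$ it must include every cousin with posterior exceeding $\epsilon$, i.e., every cousin at depth $j\le\sqrt{c\log(1/\epsilon)}$. Summing the counts $\exp(\Theta(j))$ over these depths yields $K(\epsilon)\ge\exp(c\sqrt{\log(1/\epsilon)})$.

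The main technical obstacle is the deterministic counting step in the upper bound: the bound $\exp(O(\log R/\log\log R))$ on the super-level set of $\phi$ must hold uniformly over all trees, so it rests on a purely combinatorial/convexity inequality about products of subtree sizes. A naive bound loses a polynomial factor in $R$, which would degrade the upper bound to polynomial in $1/\epsilon$. The lower bound has its own delicate balance: the $\exp(\Theta(j))$ cousin count at depth $j$ must be matched to the $\exp(-\Theta(j^2))$ posterior decay, and both sides require sharp P\'olya-urn Beta-ratio estimates. The remainder of the argument -- setting up the MLE algorithm, invoking the hook-length identity, and manipulating standard UA distributional facts -- is routine by comparison.
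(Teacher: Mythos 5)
Your choice of algorithm for the upper bound is the paper's: output the $K$ vertices minimizing the rumor-centrality product $\phi_T(u)=\prod_{v\neq u}|(T,u)_{v\downarrow}|$ (though note the exact posterior is proportional to $1/\zeta$, with automorphism corrections, not $1/\phi$; the paper uses $\phi$ precisely as a \emph{relaxation} of the MLE to avoid those factors). However, the step you yourself flag as the main obstacle --- the tree-uniform deterministic bound $|\{v:\phi(v)\le R\cdot\min_u\phi(u)\}|\leq\exp(C\log R/\log\log R)$ --- is false, so the plan as written does not go through. Take $T$ a path on $n$ vertices: $\phi(v_k)=(k-1)!\,(n-k)!$, so $\phi(v_k)/\phi_{\min}=\binom{n-1}{\lfloor (n-1)/2\rfloor}/\binom{n-1}{k-1}\approx\exp(2m^2/n)$ for $k=n/2+m$, and the sublevel set at ratio $R$ has size $\Theta(\sqrt{n\log R})$, unbounded in $n$. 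No purely combinatorial inequality can rescue this; the counting must be probabilistic. The paper's actual decomposition is different: it indexes candidate vertices by their birth-order address $(j_1,\dots,j_\ell)$, assigns each address the weight $s(v)=\sum_k(\ell+1-k)j_k$, counts addresses with $s(v)\le 3S$ via the Hardy--Ramanujan partition bound $\exp(\pi\sqrt{2s/3})$, and shows via a Gamma-tail (Chernoff) estimate on the P\'olya-urn limit that each address with $s(v)$ large beats the root with probability at most $\exp(-c\sqrt{s(v)}\log s(v))$. The $\log\log$ in the final bound comes exactly from balancing $\exp(\pi\sqrt{2S})$ against $\exp(-c\sqrt{S}\log S)$ --- so your intuition about where the exponent comes from is right, but the two quantities being balanced live on the address space, not on the observed $\phi$-sublevel sets.

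For the lower bound there is also a logical gap in how you convert posterior magnitudes into a bound on $K$: an algorithm need not ``include every cousin with posterior exceeding $\epsilon$''; it only needs the total excluded posterior mass to be at most $\epsilon$ on average, so exhibiting $\exp(\Theta(j))$ vertices each with posterior $\approx\epsilon$ does not by itself force $K\ge\exp(\Theta(j))$. The paper's argument is cleaner and avoids this: it first observes that the error of the optimal ($K$-output MLE) procedure is monotone in $n$, so it suffices to work at $n=K+1$; it then exhibits an explicit event $\cT$ of probability $>\epsilon$ (the first $10\log K$ vertices form a path with the root at its end, and the remaining vertices hang off the far endpoint with controlled height) on which the true root has the strictly \emph{worst} likelihood value $\zeta$ among all $K+1$ vertices, so even the optimal procedure misses it. The probability calculation $\P(\cT)\approx\exp(-30\log^2 K)$ is what produces $K\ge\exp(c\sqrt{\log(1/\epsilon)})$; your $\exp(-\Theta(j^2))$ decay along the principal chain is the same quadratic phenomenon, but the ``timestamp-exchanging cousins'' and the matching $\exp(\Theta(j))$ count are not made precise and are not needed in the paper's route.
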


We also investigate the existence of root-finding algorithms for the preferential attachment model, in which each arriving new node connects to an existing node with probability proportional to its degree. In this model the existence of root-finding algorithm is much less surprising, and, in fact, we show that considering vertices of large degrees works here. More interestingly, we prove that the preferential attachment model is {\em exponentially} more difficult than the uniform attachment model, in the sense that $K(\epsilon)$ has to be at least polynomial in $1/\epsilon$ (while it can be subpolynomial in uniform attachment). More precisely, we prove the following theorem.

\begin{theorem} \label{th:mainpa}
There exist constants $c, c'>0$ such that the following holds true in the preferential attachment model. Any root-finding algorithm must satisfy $K(\epsilon) \geq \frac{c}{\epsilon}$. Furthermore, there exists a polynomial time root-finding algorithm with $K(\epsilon) \leq c' \frac{\log^2(1/\epsilon)}{\epsilon^4}$.
\end{theorem}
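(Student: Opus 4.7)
The plan splits into an algorithmic upper bound (the top-degree heuristic) and an information-theoretic lower bound (indistinguishability of the root from early descendants). The upper-bound algorithm will simply output the set of the $K$ vertices of largest degree in the input tree $T$, which is trivially polynomial time. To analyze it, I would invoke the Polya urn embedding of the preferential attachment process to obtain two quantitative facts about the degree sequence: (i) the normalized root degree $D_1(n)/\sqrt{n}$ converges in distribution to a positive random variable whose density vanishes only polynomially near zero, giving a lower-tail bound of the form $\P(D_1(n) < s\sqrt{n}) \lesssim s^\alpha$ for some $\alpha > 0$; (ii) for $i \geq 2$, the rescaled degree $D_i(n)/\sqrt{n/i}$ has Gaussian-type upper tails. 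Together, (i) and (ii) control both the strength of the root and the number of imposter vertices whose degree can exceed it.

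To extract the stated bound, I would pick a threshold $s = s(\epsilon) \asymp \epsilon^{1/\alpha}$ so that the root's degree exceeds $s\sqrt{n}$ with probability $1-\epsilon/2$, and then count the exceedances $N_s := |\{i : D_i(n) > s\sqrt{n}\}|$. Summing the marginal tails from (ii) yields $\E N_s \lesssim 1/s^2$; lifting this to a high-probability bound of order $\log(1/\epsilon)/s^2$ would be achieved via a second-moment calculation for $N_s$ or a standard PA concentration inequality. Combining with the lower-tail bound in (i) and plugging in the value of $\alpha$ that comes out of the Polya urn limit (quadratic vanishing at zero for the squared root degree, so $\alpha = 1/2$) gives the announced $K \leq c'\log^2(1/\epsilon)/\epsilon^4$.

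For the lower bound, the strategy is to exhibit, with probability bounded away from zero, a posterior law of the root given the unlabeled tree that is close to uniform on a set of $L \asymp 1/\epsilon$ candidates, which immediately forces $K \geq c/\epsilon$. The baseline observation is that, already at time $n=2$, relabeling vertices $1$ and $2$ preserves the joint law of the PA process, so the root's posterior gives mass exactly $1/2$ to each of them. To amplify this to $L$ indistinguishable candidates, I would condition on a favorable event about the first several arrivals --- essentially that the root quickly accumulates children whose subsequent subtrees have comparable sizes --- and then apply a sequence of label permutations that leave the labeled likelihood almost invariant, exploiting the fact that the PA likelihood factorizes through subtree sizes at moments of attachment rather than through specific labels. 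The main obstacle is precisely this last step: because the PA likelihood depends on the time-ordering of arrivals, identifying an $L$-element family of permutations whose Radon--Nikodym derivatives are simultaneously controlled requires a careful coupling argument, and obtaining the sharp rate $L \asymp 1/\epsilon$ (rather than merely a logarithmic lower bound) is the main quantitative challenge.
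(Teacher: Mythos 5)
Your upper bound takes a genuinely different route from the paper's. The paper does not rank vertices by degree; it ranks them by the statistic $\psi_T(u)$, the size of the largest subtree hanging off $u$, precisely because subtree sizes form a P\'olya urn with \emph{diagonal} replacement matrix, so the relevant limits are explicit Dirichlet/Beta distributions (the root's largest subtree converges to a $\mathrm{Beta}(1/2,1/2)$, whence the arcsine bound $\sqrt{\eta}$ and the choice $\eta=\epsilon^2$; the imposters are controlled by a $\mathrm{Beta}(K-1-d_1/2,\,d_1/2)$ tail). The authors explicitly remark that the top-degree algorithm you propose would also work, and might even give a better power of $\epsilon$, but that its analysis is harder because degrees evolve as \emph{triangular} P\'olya urns with more delicate limiting laws. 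So your plan is plausible in principle, but the two facts you need --- the polynomial lower tail for $D_1(n)/\sqrt{n}$ with the right exponent, and uniform Gaussian-type upper tails for $D_i(n)/\sqrt{n/i}$ strong enough to sum over $i$ and then concentrate $N_s$ via a second moment --- are exactly the technical content you have deferred, and they are not routine. If you want the stated $\log^2(1/\epsilon)/\epsilon^4$ bound with a short proof, the subtree-size statistic is the path of least resistance.

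For the lower bound there is a genuine gap: you have set yourself a much harder problem than necessary and acknowledged you cannot close it. No approximate-indistinguishability argument, label permutations, or Radon--Nikodym control is needed. The paper's argument is an exact symmetry: with probability $\Theta(1/\sqrt{n})$ the root of $\PA(n)$ is a leaf, so taking $n=\Theta(1/\epsilon^2)$ this event has probability $\Theta(\epsilon)$; conditionally on it, with constant probability the root's unique neighbor has $\Theta(\sqrt{n})=\Theta(1/\epsilon)$ leaf children, and all of these leaves are \emph{isomorphic} to the root in the unlabeled tree. The posterior of the root is therefore exactly uniform over $\Theta(1/\epsilon)$ vertices on an event of probability $\Theta(\epsilon)$, which forces $K\geq c/\epsilon$. (One also needs the observation, which you omit, that the optimal error probability is non-decreasing in $n$, so establishing the bound at a single finite $n$ suffices for the $\liminf$ in the definition of a root-finding algorithm.) Your proposed coupling of likelihoods under label permutations is not only unnecessary but, as you note yourself, the step you cannot complete; replacing it with the exact-isomorphism count is the missing idea.
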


\subsection{Related work}
There is a vast and rapidly growing literature on both uniform attachment and preferential attachment models. However, to the best of our knowledge, root-finding algorithms have not yet been investigated. On the other hand, a {\em local} variant of root-finding algorithms is studied for the preferential attachment model in \cite{BK10,borgs2012power,FP14}. The restriction to algorithms with only a local access to the graph make the setting quite different from ours. In this context it is proved that the algorithm only has to visit a polylogarithmic (in $n$) number of vertices, which has to be contrasted with our condition that a root-finding algorithm has to output a set of size independent of $n$. We also note that in \cite{SZ11} the authors study root-finding algorithms that are restricted to output a single vertex (that is $K=1$). They are interested in the attainable probability of correctness for a model which can be viewed as the uniform attachment on a {\em background graph}. Interestingly the (inverse) likelihood function for this model, which is called the {\em rumor centrality} in \cite{SZ11}, appears as a relaxation of the likelihood for our model (see Section \ref{sec:relaxationua} for details).

Another recent line of work intimately related to the question studied here is \cite{bubeck2014influencePAseed,curien2014scaling,BEMR14}. In these papers the uniform attachment and preferential attachment models are initialized with some finite seed tree. It is then proved that different seeds lead to different distributions, even in the limit as $n$ goes to infinity. In other words, the uniform attachment and preferential attachment trees are strongly influenced by their state after a finite number of steps, which gives hope that root-finding algorithms indeed exist\footnote{However, chronologically, the work presented here was done before \cite{BEMR14} which proves the influence of the seed in the uniform attachment model.}.

\subsection{Content of the paper}
We start by presenting a simple root-finding algorithm for the uniform attachment model in Section \ref{sec:simpleua}. The idea is to rank vertices according to the size of their {\em largest subtree}. Using basic P{\'o}lya urn results, we show that by taking the smallest $K=\frac{\log(1/\epsilon)}{\epsilon}$ vertices (according to their largest subtree size), one obtains a root-finding algorithm. 

We prove the impossibility result described in Theorem \ref{th:mainua} in Section \ref{sec:mleua}. Using a basic combinatorial argument we show that the optimal estimator
for the root---the maximum likelihood estimator (MLE)---can be computed in polynomial time, and we use well-known results about the uniform attachment tree (such as the behavior of its height) to exhibit the limits of the MLE (and thus of any root-finding algorithm).

In Section \ref{sec:relaxationua} we observe that the root-finding algorithm studied in Section \ref{sec:simpleua} can be viewed as a relaxation of the MLE. We then propose a ``tighter'' relaxation which corresponds to the subpolynomial root-finding algorithm mentioned in Theorem \ref{th:mainua}. The analysis of this algorithm is the most technical part of the paper. It relies on a sharp concentration inequality for sums of Gamma random variables, as well as a beautiful result of Hardy and Ramanujan on the number of partitions of an integer, \cite{HR18}.

Observe that, since the MLE can be computed in polynomial-time, the relaxations of Section \ref{sec:simpleua} and Section \ref{sec:relaxationua} are mainly introduced to simplify the analysis. We note however that these relaxations may still have computational advantages in practice (though the worst-case complexity is of the same order of magnitude).

Finally the preferential attachment model is analyzed in Section \ref{sec:pa}, where Theorem \ref{th:mainpa} is proven. We conclude the paper with several open problems in Section \ref{sec:op}.

\subsection{Notation} \label{sec:notation}
For a labeled tree $T$ we denote by $T^{\circ}$ the isomorphism class of $T$. In other words $T^{\circ}$ is an unlabeled copy of $T$.  For notational convenience we denote vertices from $T^{\circ}$ using the labeling of $T$ (formally one would need to chose an arbitrary labeling of $T^{\circ}$, and to compare vertices of $T^{\circ}$ to those of $T$ one would need to introduce the corresponding isomorphism). We denote by $V(T)$ the vertex set of a labeled tree, and again with a slight abuse of notation we extend this notation to unlabeled trees. The degree of a vertex $v$ is denoted by $d_T(v)$, or simply by $d(v)$ if the underlying tree is clear from the context. An increasing labeling for a tree $T$ with $V(T)=[n]$ is such that any path away from vertex $1$ has increasing labels. A recursive tree is a tree labeled with an increasing labeling. A rooted tree (labeled or unlabeled) is denoted by $(T,u)$, with $u \in V(T)$. We sometimes denote a rooted tree simply as $T$, in which case we denote the root as $\emptyset$. In a recursive tree $T$ it is understood that $\emptyset = 1$. In a rooted tree $T$ we denote by $T_{v\downarrow}$ the subtree starting at $v$. In a rooted tree the descendants of a vertex are referred to as its children, and the set of vertices with no children (i.e., the leaves) is denoted by $\cL(T)$. A plane-oriented recursive tree is a recursive tree together with an ordering of the children of each vertex.
\newline

For $\alpha \in \R$ we define the random labeled tree $\cT_{\alpha}(n)$ with vertex set $[n]$ by induction as follows: $\cT_{\alpha}(2)$ is the unique tree on $2$ vertices, and $\cT_{\alpha}(n+1)$ is built from $\cT_{\alpha}(n)$ by adding the vertex $n+1$ and an edge $\{i, n+1\}$, where $i \in [n]$ is chosen at random with probability proportional to $d_{\cT_{\alpha}(n)}(i)^{\alpha}$ (that is, its degree in $\cT_{\alpha}(n)$ raised to the power $\alpha$). We focus on the case $\alpha=0$, which we alternatively denote $\UA(n)$ for {\em uniform attachment model}, and on $\alpha = 1$ which we denote $\PA(n)$ for {\em preferential attachment model}. It is well known that $\UA(n)$ is equivalently described as a uniformly chosen tree among all recursive trees, while $\PA(n)$ can be described as a uniformly chosen tree among all plane-oriented trees, see e.g., \cite{drmota2009random}. For this reason $\UA(n)$ is also referred to as the 
uniform random recursive tree, and $\PA(n)$ as the random plane-oriented recursive tree.
\newline

We can now formalize the problem introduced at the beginning of the introduction. Let $\alpha \in \{0,1\}$, $\epsilon \in (0,1)$, and $K \in \N$. We are interested in mappings $H$ from unlabeled trees to subsets of $K$ vertices with the property that 
\begin{equation} \label{eq:goal}
\underset{n \to +\infty}{\mathrm{liminf}} \ \P(1 \in H(\cT_{\alpha}(n)^{\circ})) \geq 1-\epsilon
\end{equation}
In most instances the input tree will be clear from the context, and thus we often write $H$ instead of $H(\cT_{\alpha}(n)^{\circ})$.

\section{A simple root-finding algorithm} \label{sec:simpleua}
For a tree $T$ we introduce the function $\psi_T : V(T) \rightarrow \N$, defined by
\begin{equation} \label{eq:psi}
\psi_T(u) = \max_{v \in V(T) \setminus \{u\}} |(T,u)_{v \downarrow}| .
\end{equation}
In words, viewing $u$ as the root of $T$, $\psi_T(u)$ returns the size of the largest subtree starting at a child of $u$. We denote by $H_{\psi}$ the mapping which returns the set of $K$ vertices with smallest $\psi$ values (ties are broken arbitrarily). The following theorem shows that $H_{\psi}$ is a root-finding algorithm for the uniform attachment model. We also prove in Section \ref{sec:pa} that it is a root-finding algorithm for the preferential attachment.

\begin{theorem} \label{th:psi}
Let $K \geq 2.5 \frac{ \log(1/ \epsilon)}{\epsilon}$. One has $\underset{n \to +\infty}{\mathrm{liminf}} \ \P(1 \in H_{\psi}(\UA(n)^{\circ})) \geq 1 - \frac{4 \epsilon}{1-\epsilon}$. 
\end{theorem}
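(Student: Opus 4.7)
The plan rests on two structural identities for $\psi_T$. First, $\psi_T(1)=\max_{i\neq 1}|T_{i\downarrow}|$, since every non-root subtree of $(T,1)$ lies inside the subtree of some child of the root. Second, for any non-root $u$, rerooting at $u$ makes the original parent of $u$ into a ``child'' of $u$ whose associated subtree has size $n-|T_{u\downarrow}|$; hence $\psi_T(u)\ge n-|T_{u\downarrow}|$. Combining the two, any strict competitor $u\neq 1$ of the root (i.e.\ with $\psi_T(u)<\psi_T(1)$) must satisfy $|T_{u\downarrow}|>n-\psi_T(1)$.

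I then fix the threshold $\eta=\epsilon$ and introduce the two failure events
\[
A_1=\{\psi_T(1)>(1-\epsilon)n\},\qquad A_2=\bigl\{\bigl|\{v\in V(T):|T_{v\downarrow}|>\epsilon n\}\bigr|>K\bigr\}.
\]
On $A_1^c$, every strict competitor of $1$ has $|T_{u\downarrow}|>\epsilon n$, and on $A_2^c$ there are at most $K-1$ such vertices besides the root. So on $A_1^c\cap A_2^c$ the root has at most $K-1$ strict competitors and therefore lies in $H_\psi$ (tie-breaking being irrelevant in the limit). It remains to bound $\P(A_1)+\P(A_2)\le 4\epsilon/(1-\epsilon)$.

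The bound on $\P(A_1)$ is a textbook P\'olya urn computation. For each $i\ge 2$, the pair $(|T_{i\downarrow}(n)|,\,n-|T_{i\downarrow}(n)|)$ evolves as a P\'olya urn starting from $(1,i-1)$, so $|T_{i\downarrow}|/n\Rightarrow\mathrm{Beta}(1,i-1)$ and in particular $\P(|T_{i\downarrow}|>(1-\epsilon)n)\to\epsilon^{i-1}$. A union bound, using the identity $\psi_T(1)=\max_{i\ge 2}|T_{i\downarrow}|$, yields $\limsup_n\P(A_1)\le\sum_{i\ge 2}\epsilon^{i-1}=\epsilon/(1-\epsilon)$.

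Bounding $\P(A_2)$ is the main technical step. Writing $H_\epsilon=\{v:|T_{v\downarrow}|>\epsilon n\}$, the P\'olya urn marginal gives $\E|H_\epsilon|\to\sum_{i\ge 1}(1-\epsilon)^{i-1}=1/\epsilon$, so Markov alone only produces $\P(A_2)\le 1/(K\epsilon)=O(1/\log(1/\epsilon))$, which is too weak. The key observation is that $|H_\epsilon|$ actually has a geometric tail $\P(|H_\epsilon|>K)\lesssim(1-\epsilon)^K$. The cleanest route is via the continuous-time Yule embedding of $\UA$: equip each vertex with an independent $\mathrm{Exp}(1)$ clock generating children, so $|T(t)|$ is a Yule process with $|T(t)|e^{-t}\to W$ and each $|T_{v\downarrow}(t)|e^{-(t-\tau_v)}\to W_v$, all i.i.d.\ $\mathrm{Exp}(1)$. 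The condition $|T_{v\downarrow}|/n>\epsilon$ becomes the birth-time condition $\tau_v<\log(W_v/(\epsilon W))$, so $|H_\epsilon|$ is, up to the multiplicative fluctuation $W_v/W$, the size $N(\log(1/\epsilon))$ of the Yule process at time $\log(1/\epsilon)$, which is $\mathrm{Geom}(\epsilon)$ with tail $(1-\epsilon)^K\le e^{-\epsilon K}$. For $K\ge 2.5\log(1/\epsilon)/\epsilon$ this yields $\P(A_2)\le\epsilon^{2.5}+o(1)$, and summing the two bounds gives $\P(1\notin H_\psi)\le\epsilon/(1-\epsilon)+O(\epsilon^{2.5})\le 4\epsilon/(1-\epsilon)$ in the range of $\epsilon$ where the latter is nontrivial. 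The main obstacle I foresee is making this tail bound rigorous and uniform: quantitatively absorbing the $W_v/W$ fluctuation over all $v$ when converting the subtree-size threshold into a birth-time threshold. An alternative route is a direct Chernoff-type bound on $\sum_i \ds1[|T_{i\downarrow}|>\epsilon n]$ using the stick-breaking (GEM$(1)$) structure of the root's child subtree sizes.
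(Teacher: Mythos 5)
Your reduction is sound: the two structural facts $\psi_T(1)=\max_{v\neq 1}|(T,1)_{v\downarrow}|$ and $\psi_T(u)\ge n-|(T,1)_{u\downarrow}|$ are correct, and on $A_1^c\cap A_2^c$ the root indeed has fewer than $K$ strict competitors. The bound $\limsup_n\P(A_1)\le\epsilon/(1-\epsilon)$ is also essentially fine (the paper settles for the cruder $\psi(1)\le\max(|T_{1,2}|,|T_{2,2}|)$ and the bound $2\epsilon$; your infinite union bound additionally needs a uniform-in-$n$ domination of $\P(|T_{i\downarrow}(n)|>(1-\epsilon)n)$ to justify exchanging $\limsup_n$ with the sum over $i$, a fixable technicality). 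The genuine gap is the estimate $\P(A_2)\lesssim(1-\epsilon)^K$, which is the entire quantitative content of the theorem and which you leave as a sketch with an obstacle you yourself flag as unresolved. The Yule-embedding heuristic is not a proof: the event $\{|T_{v\downarrow}|>\epsilon n\}$ is $\{\tau_v<\log(W_v/(\epsilon W))+o(1)\}$, and since there are infinitely many vertices and the $W_v$ are unbounded, arbitrarily late-born vertices can enter $H_\epsilon$; controlling $\sup_v W_v e^{-\delta\tau_v}$ uniformly and then recovering a clean geometric tail for $|H_\epsilon|$ is real work, not a routine absorption of fluctuations.

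The paper closes exactly this gap with a finite-dimensional argument that you could splice directly into your framework. Order vertices chronologically and let $T_{1,K},\dots,T_{K,K}$ be the trees of the forest obtained by deleting all edges among $\{1,\dots,K\}$; then $(|T_{1,K}|,\dots,|T_{K,K}|)/n$ is a standard P\'olya urn and converges to $\mathrm{Dirichlet}(1,\dots,1)$. Any vertex $v$ with chronological index $>K$ has $T_{v\downarrow}\subseteq T_{k,K}$ for the (unique) $k$ with $v\in T_{k,K}$, so if $|H_\epsilon|>K$ then some vertex of index $>K$ lies in $H_\epsilon$, hence some $|T_{k,K}|>\epsilon n$; equivalently some coordinate sum $\sum_{j\neq k}|T_{j,K}|$ is below $(1-\epsilon)n$, and since each such normalized sum converges to $\mathrm{Beta}(K-1,1)$, a union bound over $k\le K$ gives $\limsup_n\P(A_2)\le K(1-\epsilon)^{K-1}$, which is $O(\epsilon^{1.5}\log(1/\epsilon))$ for $K\ge 2.5\log(1/\epsilon)/\epsilon$. (The paper phrases this as $\psi(i)\ge\min_k\sum_{j\neq k}|T_{j,K}|$ for $i>K$ rather than through your set $H_\epsilon$, but it is the same estimate.) Until you either adopt this reduction to the first $K$ vertices or actually carry out the uniform control of the $W_v/W$ terms, the proof is incomplete at its main step.
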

We observe that the theorem is optimal up to a logarithmic factor. Indeed, leaves clearly
maximize the value of $\psi$ and one can easily see that, with probability at least $\Omega(1/K)$, the true root is a leaf in $\UA(K)$. Nonetheless, perhaps surprisingly, there is an exponentially better root-finding algorithm than $H_{\psi}$ for the uniform attachment model.

\begin{proof} 
In this proof vertices are labeled by chronological order. We also introduce a notation which will prove useful; for $1 \leq i \leq k$ we denote by $T_{i,k}$ the tree containing vertex $i$ in the forest obtained by removing in $\UA(n)$ all edges between vertices $\{1,\hdots, k\}$. In particular the vector $(|T_{1,k}|, \hdots, |T_{k,k}|)$ follows a standard P\'olya urn with $k$ colors, and thus, using a classical result, 
$$\frac{1}{n} (|T_{1,k}|, \hdots, |T_{k,k}|)$$
converges, in distribution, to a Dirichlet distribution with parameters $(1, \ldots, 1).$
Now observe first that
$$\P(1 \not\in H_{\psi}) \leq \P(\exists i > K : \psi(i) \leq \psi(1)) \leq \P(\psi(1) \geq (1-\epsilon) n) + \P(\exists i > K : \psi(i) \leq (1-\epsilon) n) .$$
Clearly
$$\psi(1) \leq \max(|T_{1,2}|, |T_{2,2}| ),$$
and thus since $|T_{1,2}|/n$ and $|T_{2,2}|/n$ are identically distributed and converge in distribution to a uniform random variable in $[0,1]$,
$$\limsup_{n \to +\infty} \P(\psi(1) \geq (1-\epsilon) n) \leq 2 \lim_{n \to +\infty} \P(|T_{1,2}| \geq (1-\epsilon) n ) = 2 \epsilon .$$
On the other hand, for any $i > K$,
$$\psi(i) \geq \min_{1 \leq k \leq K} \sum_{j=1, j \neq k}^K |T_{j,K}| ,$$
and $\frac{1}{n} \sum_{j=1, j \neq k}^K |T_{j,K}|$ converges, in distribution, to the $\text{Beta}(K-1,1)$ distribution, which implies
\begin{eqnarray*}
\limsup_{n \to +\infty} \P(\exists i > K : \psi(i) \leq (1-\epsilon) n) & \leq & \lim_{n \to +\infty} \P\left(\exists 1 \leq k \leq K : \sum_{j=1, j \neq k}^K |T_{j,K}| \leq (1-\epsilon) n \right) \\
& \leq & K (1-\epsilon)^{K-1} .
\end{eqnarray*}
Thus we proved 
$$\limsup_{n \to +\infty} \P(1 \not\in H_{\psi}) \leq 2 \epsilon + K (1-\epsilon)^{K-1} ,$$
which clearly concludes the proof.
\end{proof}

\section{Maximum likelihood estimator for $\UA(n)$} \label{sec:mleua}
For a rooted tree $T$ and a vertex $v$, we define $\mathrm{Aut}(v, T)$ as follows. Let $T_1, \hdots, T_{k}$ be the subtrees of $T$ rooted at the children of $v$ (in particular $k \in \{d(v)-1, d(v)\}$). Let $S_1, \hdots, S_{L}$ be the different isomorphism classes realized by these rooted subtrees. For $i \in [L]$, let $\ell_i = | \{j \in [k] : T_j^{\circ} = S_i\} |$. Finally we let $\mathrm{Aut}(v, T) := \prod_{i = 1}^{L} \ell_i !$.
\begin{proposition} \label{prop:counting}
Let $T$ be an unlabeled rooted tree, then
$$|\{t \ \text{recursive tree} \ : t^{\circ} = T\}| = \frac{|T|!}{\prod_{v \in V(T) \setminus \cL(T)} \left( |T_{v \downarrow}| \cdot  \mathrm{Aut}(v, T) \right)} .$$
\end{proposition}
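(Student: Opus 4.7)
My plan is induction on $|T|$, writing $N(T):=|\{t\text{ recursive tree}:t^\circ=T\}|$ for brevity. The base case $|T|=1$ is immediate: $T$ is a single vertex, the product over non-leaves is empty, and the unique recursive tree makes both sides equal to $1$.

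For the inductive step, let $r$ be the root of $T$ with child-subtrees $T_1,\dots,T_k$, and let $S_1,\dots,S_L$ with multiplicities $\ell_1,\dots,\ell_L$ be the distinct isomorphism classes realised by the $T_j$'s, so that by definition $\mathrm{Aut}(r,T)=\prod_i\ell_i!$. I would assemble a recursive tree $t$ with $t^\circ=T$ in three stages: (i) place label $1$ on the image of $r$; (ii) pick an ordered partition $(A_1,\dots,A_k)$ of $\{2,\dots,|T|\}$ with $|A_j|=|T_j|$, which can be done in $(|T|-1)!/\prod_j|T_j|!$ ways; (iii) build on each $A_j$ a recursive tree whose unlabeled rooted class is $T_j^\circ$. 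By an order-preserving relabeling, the number of admissible recursive trees on $A_j$ equals $N(T_j)$, which the induction hypothesis evaluates to $|T_j|!/\prod_{v\in V(T_j)\setminus\cL(T_j)}|T_{v\downarrow}|\cdot\mathrm{Aut}(v,T)$ (using that $|T_{v\downarrow}|$ and $\mathrm{Aut}(v,\cdot)$ depend only on the local structure at $v$ and so agree in $T$ and in $T_j$).

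The product of these three factors over-counts $N(T)$. The crux is to show that two tuples produced in (ii)--(iii) yield identical labeled trees $t$ if and only if they differ by a permutation of slot-indices that swaps labeled subtrees lying in the same class $S_i$. The "if" direction is clear; for "only if", once the label set $A_j$ of a subtree is fixed, two distinct recursive trees on $A_j$ are distinct as labeled trees even if their unlabeled shapes coincide, so no additional collisions arise from automorphisms internal to a single $T_j$. Hence the overcount factor is exactly $\prod_i\ell_i!=\mathrm{Aut}(r,T)$.

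Combining the three factors gives the recursion
\[
N(T)=\frac{1}{\mathrm{Aut}(r,T)}\cdot\frac{(|T|-1)!}{\prod_j|T_j|!}\cdot\prod_j N(T_j),
\]
and a routine simplification using $V(T)\setminus\cL(T)=\{r\}\sqcup\bigsqcup_j\bigl(V(T_j)\setminus\cL(T_j)\bigr)$ together with $|T_{r\downarrow}|=|T|$ recovers the claimed closed form. The only genuinely non-routine step is the collision analysis in the over-count; the rest is bookkeeping.
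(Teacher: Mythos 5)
Your proof is correct and follows essentially the same route as the paper's: induction on $|T|$, counting ordered partitions of $\{2,\dots,|T|\}$ into label sets for the root's child-subtrees, multiplying by the recursive counts, and dividing by the overcount factor $\mathrm{Aut}(r,T)$. Your collision analysis is in fact slightly more careful than the paper's (which simply asserts the overcount factor), but the decomposition and bookkeeping are identical.
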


\begin{proof}
We prove this result by induction on the number of vertices in $T$, which we denote by $n$ (the formula is clearly true for $n=2$). Let $T_1, \hdots, T_k$ be the subtrees rooted at the children of $\emptyset$ (in particular $k=d(\emptyset)$). An increasing labeling for $T$ is obtained by partitioning $\{2, \hdots, n\}$ into $k$ subsets of sizes $|T_1|, \hdots, |T_k|$, and then labeling correctly each subtree with the corresponding element of the partition. Clearly the number of possibilities for choosing the partition is $\frac{(n-1)!}{\prod_{i=1}^k |T_i|!}$, and given the partition the number of correct labelings is $\prod_{i=1}^k |\{t \ \text{recursive tree} \ : t^{\circ} = T_i\}|$. In this calculation we have counted multiple times similar recursive trees (of size $n$). Indeed if $T_i$ and $T_j$ are isomorphic then we have considered separately the case where $T_i$ is labeled with $S \subset \{2, \hdots, n\}$ and $T_j$ is labeled with $S'$, and the case where $T_i$ is labeled with $S'$ and $T_j$ is labeled with $S$. In fact we have precisely overcounted by a factor $\mathrm{Aut}(\emptyset, T)$. Thus we obtain the formula
$$|\{t \ \text{recursive tree} \ : t^{\circ} = T\}| = \frac{(n-1)!}{\mathrm{Aut}(\emptyset, T) \prod_{i=1}^k |T_i|!} \prod_{i=1}^k |\{t \ \text{recursive tree} \ : t^{\circ} = T_i\}| ,$$
which easily concludes the proof (by using the induction hypothesis).
\end{proof}
For an unrooted unlabelled tree $T$ let $\overline{\mathrm{Aut}}(u, T)$ be the number of vertices $v$ such that $(T,v)$ is isomorphic to $(T,u)$. Observe that the probability that a vertex $u$ in $T$ is the root (in the $\UA(n)$ model) is equal to the probability of observing the rooted tree $(T,u)$ (which is proportional to the number of recursive trees $t$ such that $t^{\circ} = (T,u)$) divided by $\overline{\mathrm{Aut}}(u, T)$. In particular Proposition \ref{prop:counting} implies that, given an  observation $T$ (i.e., an unlabeled tree on $n$ vertices), the maximum likelihood estimator for the root in the uniform attachment model is the vertex minimizing the function
\begin{equation} \label{eq:zeta}
\zeta_T(u) = \overline{\mathrm{Aut}}(u, T) \prod_{v \in V(T) \setminus \cL((T,u))} \left( |(T,u)_{v \downarrow}| \cdot \mathrm{Aut}(v, (T,u)) \right) .
\end{equation} 
In fact it implies more generally that the optimal strategy to output a set of $K$ vertices is to choose those with the smallest values of $\zeta$. This follows from the fact that, conditionally on the observation $T$, the probability that a set of vertices contains the root is proportional to the sum of the inverse $\zeta$ values for the vertices in this set. We denote the mapping corresponding to this optimal strategy by $H_{\zeta}$. Using this representation for the optimal procedure we prove now the following impossibility result:

\begin{theorem} \label{th:zeta}
There exists $\epsilon_0>0$ such that for all $\epsilon \leq \epsilon_0$, any procedure that satisfies \eqref{eq:goal} in the uniform attachment model must have $K \geq \exp\left( \sqrt{\frac{1}{30} \log \frac1{2\epsilon}}\right)$.
\end{theorem}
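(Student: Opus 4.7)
The plan is to directly lower-bound the error probability of the Bayes-optimal procedure $H_\zeta$. Writing the success probability as
\[
\P(1\in H_\zeta)\;=\;\E\!\left[\frac{\sum_{u\in H_\zeta(T^\circ)} 1/\zeta_T(u)}{\sum_{u\in V(T)} 1/\zeta_T(u)}\right],
\]
I will construct an event $E$ of probability at least $2\epsilon$ on which the top-$K$ vertices by smallest $\zeta$ capture at most half of the posterior mass. Combined with $\P(E)\geq 2\epsilon$ this gives $\P(1 \in H_\zeta) \leq 1 - \epsilon$ whenever $K < \exp(\sqrt{\log(1/(2\epsilon))/30})$.

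The first step is a re-rooting identity derived from Proposition \ref{prop:counting}: for adjacent vertices $u,w$ with $A,B$ the two components of $T$ obtained by deleting edge $\{u,w\}$ (and $u \in A$),
\[
\frac{\zeta_T(w)}{\zeta_T(u)}\;=\;\frac{|A|}{|B|}\cdot\kappa_T(u,w),
\]
where $\kappa_T(u,w)$ collects the local $\mathrm{Aut}$- and $\overline{\mathrm{Aut}}$-corrections at $u$ and $w$. Telescoping along the $1$-to-$v$ path expresses $\zeta_T(v)/\zeta_T(1)$ as a product of $|A_j|/|B_j|$-ratios with $\kappa$-corrections, so in particular every vertex $v$ on the path from $1$ toward the centroid of $T$ has $\zeta_T(v) \leq \zeta_T(1)$ up to the automorphism corrections.

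The second step is the construction of $E$. For a parameter $h$ to be chosen, I condition on the first $h$ vertices of $\UA(n)$ forming a prescribed recursive-tree skeleton $S_h$, together with a Dirichlet-type balance event asserting that each attached subtree $T_{i,h}$ has size in $[n/(2h),2n/h]$. The skeleton event has probability $1/(h-1)!$ and the balance event has constant positive probability in the limit, so for a suitable $h$ of order $\log(1/(2\epsilon))/\log\log(1/(2\epsilon))$ one already obtains $\P(E) \geq 2\epsilon$. On $E$, the re-rooting identity combined with the assumed balance of subtree sizes lets me count the early skeleton vertices $v$ with $\zeta_T(v) \leq \zeta_T(1)$, of which there are at least $\Omega(h)$.

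The main obstacle is amplifying this elementary ``$\Omega(h)$ vertices on the centroid-path'' bound into the sharper $\exp(\sqrt{\log(1/(2\epsilon))/30})$ count required by the theorem. This requires choosing $S_h$ and the attached-subtree event more carefully, and invoking the height-concentration of UA trees (the height is $(1+o(1))e\log n$ in probability) to exhibit, within each $T_{i,h}$, $\exp(\Omega(\sqrt h))$ distinct vertices whose re-rooting product $\prod |A_j|/|B_j|$ remains at most $1$. I expect this combinatorial reasoning and the resulting explicit constant $1/30$ to constitute the bulk of the technical work, with the control of the automorphism corrections $\kappa_T$ along long paths serving as the key technical lemma.
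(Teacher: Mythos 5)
There is a genuine gap: the step that would actually prove the theorem is the one you defer. Your construction yields, on an event of probability $\approx 2\epsilon$ (the skeleton of $h\approx \log(1/(2\epsilon))/\log\log(1/(2\epsilon))$ early vertices), only $\Omega(h)$ vertices whose $\zeta$ value ties or beats the root's, whereas the theorem requires $K=\exp\bigl(\sqrt{\tfrac{1}{30}\log\tfrac{1}{2\epsilon}}\bigr)\gg h$ such vertices (or, in your formulation, that the top-$K$ capture at most half the posterior mass). The proposed amplification --- finding $\exp(\Omega(\sqrt h))$ vertices per attached subtree with re-rooting product at most $1$ --- is asserted, not proved, and it is quantitatively insufficient even if true: with your $h$, $\exp(c\sqrt h)=\exp\bigl(c\sqrt{\log(1/\epsilon)/\log\log(1/\epsilon)}\bigr)$ is eventually smaller than the required $K$. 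Worse, the conditioning is pointed the wrong way: in an unconditioned UA tree the number of vertices with $\zeta_T(v)\le\zeta_T(1)$ is typically $O(1)$ (this is essentially what Theorem \ref{th:phi} exploits), and imposing \emph{balance} of the subtrees around the root's neighborhood concentrates the posterior near the centroid, where the root then sits --- it makes the root easier to find, not harder. To spread the posterior you must condition on the root looking \emph{atypical}, e.g.\ like a deep leaf. Finally, ``up to the automorphism corrections'' in your re-rooting identity is precisely the delicate point; it needs an explicit bound such as $\mathrm{Aut}(u_i,(T,v))\le |(T,v)_{u_i\downarrow}|\cdot\mathrm{Aut}(u_i,(T,u))$ before the telescoping comparison is usable.

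The paper's argument is both simpler and structurally different. Since $H_\zeta$ is Bayes-optimal, its error probability is non-decreasing in $n$, so one may take $n=K+1$; then $H_\zeta$ outputs all but one vertex, and it suffices to exhibit a set of recursive trees of probability $>\epsilon$ on which the root has the strictly \emph{largest} $\zeta$ value, so that it is the unique vertex dropped. The event is: vertices $1,\dots,10\log K$ form a path with $1$ at the tip, and all remaining $K+1-10\log K$ vertices form a shallow (height $\le 4\log K$) subtree hanging off the far end. Its probability is at least $\tfrac12\exp(-30\log^2K)=\epsilon$ --- this is where the constant $1/30$ comes from --- and on it the crude comparison \eqref{eq:coolone} (which absorbs all automorphism factors) shows $\zeta_T(1)>\zeta_T(v)$ for every other $v$. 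No posterior-mass computation, no centroid-path counting, and no height-concentration lower bound on the number of ``good'' vertices is needed; the height result is used only to show the shallowness condition has conditional probability $\ge 1/2$. If you want to salvage your route, replace the balanced skeleton by this ``root at the end of a long path'' event and aim to show the root is the worst vertex rather than that exponentially many vertices beat it.
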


\begin{proof}
Let\footnote{For clarity of the presentation we ignore integer rounding issues in this paper.} $K =  \exp\left( \sqrt{\frac{1}{30} \log \frac1{2\epsilon}}\right)$, which we assume to be large enough (that is $\epsilon$ is small enough, see below). 
Since $H_{\zeta}$ is the optimal procedure, one clearly has that $\P(1 \not\in H_{\zeta}(\UA(n)^{\circ}))$ is non-decreasing with $n$ (since any procedure for trees of size $n+1$ can be simulated given a tree of size $n$), and thus to prove the theorem it is enough to show that $\P(1 \not\in H_{\zeta}(\UA(K+1)^{\circ})) > \epsilon$. Equivalently we need to show that there is a set $\cT$ of recursive trees on $K+1$ vertices such $\P(\UA(K+1) \in \cT) > \epsilon$ and for any $T \in \cT$ one has $\zeta_T(1) > \zeta_T(i), \forall i \in \{2, \hdots, K+1\}$.

The set of recursive trees $\cT$ we consider consists of those where (i) vertices $\{1, \hdots, 10 \log(K)\}$ form a path with $1$ being an endpoint, (ii) all vertices in $\{10\log(K) +1, \hdots, K+1\}$ are descendants of $10 \log(K)$, and (iii) the height of the subtree rooted at $10 \log(K)$ is smaller than $4 \log(K)$. Next we verify that this set of recursive trees has probability at least $\frac{1}{2} \exp(- 30 \log^2(K)) = \epsilon$. More precisely the probability that (i) happens is exactly $1/(10 \log(K) - 1)! \geq \exp(- 10 \log^2(K))$; the probability that (iii) happens conditionally on (ii) is at least $1/2$ for $K$ large enough (indeed it is well-known that the height of $\UA(n)$ rescaled by $\log(n)$ converges in probability to $e$, see \cite{Dev87}); and the probability that (ii) happens is equal to
$$\prod_{i=10\log(K)}^K \left(1 - \frac{10 \log(K) - 1}{i} \right) \geq \exp(- 20 \log^2(K)) .$$

We show now that for trees in $\cT$, the root $1$ has the largest $\zeta$ value, thus concluding the proof. First observe that, in general for any tree $T$ and vertices $u, v$, with $(u_1, \hdots, u_k)$ being the unique path with $u_1 = u$ and $u_k = v$,
\begin{align*}
& \zeta_T(u) > \zeta_T(v) \\
& \Leftrightarrow \overline{\mathrm{Aut}}(u, T) \prod_{i=1}^k \bigg(|(T,u)_{u_i \downarrow}| \cdot \mathrm{Aut}(u_i, (T,u)) \bigg) > \overline{\mathrm{Aut}}(v, T) \prod_{i=1}^k \bigg(|(T,v)_{u_{i} \downarrow}| \cdot \mathrm{Aut}(u_i, (T,v)) \bigg) .
\end{align*}
Furthermore it is easy to verify that one always has
$$\mathrm{Aut}(u_i, (T,v)) \leq |(T,v)_{u_{i} \downarrow}| \cdot \mathrm{Aut}(u_i, (T,u)) .$$
Indeed the computation of $\mathrm{Aut}(u_i, (T,v))$ is based on a list of subtrees $T_1, \hdots, T_k$, and only one of those subtrees is modified in the computation of $\mathrm{Aut}(u_i, (T,u))$ (or possibly a subtree is added if $i=1$), which results in a mutiplicative change of at most $k +1 \leq |(T,v)_{u_{i} \downarrow}|$.

Putting together the two displays above, and using the trivial bound $1 \leq \overline{\mathrm{Aut}}(u, T) \leq |T|$ one obtains
\begin{equation} \label{eq:coolone}
\prod_{i=1}^k |(T,u)_{u_i \downarrow}| > |T| \prod_{i=1}^k |(T,v)_{u_{i} \downarrow}|^2 \Rightarrow \zeta_T(u) > \zeta_T(v) .
\end{equation}
Now consider $T \in \cT$, and $v \in \{10 \log(K)+1, \hdots, K+1\}$. Let $(u_1, \hdots, u_k)$ be the unique path with $u_1 = 1$ and $u_k = v$. Clearly
$$\prod_{i=1}^k |(T,v)_{u_i \downarrow}| \leq (K+1)^{4 \log(K)} \cdot (10 \log(K)) ! \leq  (K+1)^{4 \log(K)} \cdot (10 \log(K))^{10 \log(K)},$$
and 
$$\prod_{i=1}^k |(T,u)_{u_i \downarrow}| \geq (K+1 - 10 \log(K))^{10 \log(K)} .$$
Thus using \eqref{eq:coolone} one has that for $K$ large enough, $\zeta_T(1) > \zeta_T(v)$. Furthermore it is obvious that $\zeta_T(1) > \zeta_T(v)$ for $v \in \{2, \hdots, 10 \log(K)\}$. This concludes the proof.
\end{proof}

\section{A subpolynomial root-finding algorithm for $\UA(n)$} \label{sec:relaxationua}
The performance of the maximum likelihood estimate is complicated to analyze
due to the presence of the automorphism numbers in the expression (\ref{eq:zeta}).
In order to circumvent this difficulty, we analyze algorithms that minimize
modified versions of the maximum likelihood criterion. 
The function $\psi$ defined in \eqref{eq:psi} can be viewed as such a ``relaxation'' of the likelihood function $\zeta$ defined in \eqref{eq:zeta}. In this section we analyze a tighter relaxation, which we denote $\phi$ and define as 
\begin{equation} \label{eq:phi}
\phi_T(u) = \prod_{v \in V(T) \setminus \{u\}} |(T,u)_{v \downarrow}| .
\end{equation}
We denote by $H_{\phi}$ the mapping which returns the set of $K$ vertices with smallest $\phi$ values. We also note that $1 / \phi_T(u)$ is referred to as the {\em rumor centrality} of vertex $u$ in \cite{SZ11}.

\begin{theorem} \label{th:phi}
There exist universal constants $a,b>0$ such that if
$K \geq a \exp \left(b\frac{\log(1/\epsilon)}{\log\log(1/\epsilon)}\right)$, 
then one has $\underset{n \to +\infty}{\mathrm{liminf}} \ \P(1 \in H_{\phi}(\UA(n)^{\circ})) \geq 1 - \epsilon$. 
\end{theorem}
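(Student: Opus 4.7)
The proof will bound the expected size of the ``bad set''
\[
\mathcal{B} := \{u \in V(T) : \phi_T(u) \leq \phi_T(1)\}
\]
and deduce via a tail bound that $\P(|\mathcal{B}| \geq K) \leq \epsilon$ for $K$ as stated. The natural starting point is a re-rooting identity analogous to \eqref{eq:coolone}: moving the root across one edge changes only two factors in the product defining $\phi$, so telescoping along the path $(1 = u_0, u_1, \ldots, u_k = u)$ from $1$ to $u$ gives
\[
\log \frac{\phi_T(u)}{\phi_T(1)} \;=\; \sum_{j=1}^{k} \log\frac{n-S_j}{S_j}, \qquad S_j := |(T,1)_{u_j\downarrow}|.
\]
Since $S_1 > S_2 > \cdots > S_k$, the vertex $u$ is bad iff the ``profile'' $(S_1, \ldots, S_k)$ satisfies $\prod_j S_j \geq \prod_j(n-S_j)$; already at $k=1$ this forces $S_1 \geq n/2$, and at depth $k$ the condition constrains the path to descend through a chain of heavy subtrees.

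The next ingredient is the Pólya urn structure of $\UA(n)$: the normalised subtree sizes of the children of any vertex are asymptotically $\text{Dirichlet}(1,\ldots,1)$, and these are independent across different vertices. Representing the Dirichlet via normalised i.i.d.\ $\text{Exp}(1)$ variables, the nested fractions $S_j/S_{j-1}$ (with $S_0 := n$) become functionals of sums of independent exponentials. A sharp Chernoff-type bound for the resulting log-Gamma sums then provides an exponential-in-$k$ upper bound on the probability that a given profile is bad. A subtlety is that the profile is coupled to the number of siblings at each step, since the same heavy chain supports potentially many bad vertices branching off its sides.

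To turn this into a quantitative estimate I would discretise profiles at a geometric scale at each depth so that the number of equivalence classes of profiles of depth $k$ is controlled, and invoke the Hardy--Ramanujan bound $p(m) \leq \exp(C\sqrt{m})$ on integer partitions to count these classes. Multiplying the partition count by the Chernoff probability and by the sibling multiplicity accompanying each class, then summing over $k$, should produce a tail bound of the form $\P(|\mathcal{B}| \geq K) \leq \epsilon$ for $K$ of the claimed order. Because $|\mathcal{B}| \geq 1$ always, a pure Markov step on $\E|\mathcal{B}|$ cannot be tight; one either passes to a higher moment or, more likely, bounds the tail directly by expressing $\{|\mathcal{B}| \geq K\}$ as the event that a certain random walk along the heavy chain stays above a threshold long enough to accumulate $K$ bad vertices.

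The main obstacle is the calibration in the last step: the unusual $\log(1/\epsilon)/\log\log(1/\epsilon)$ exponent does not arise from a naive $\exp(C\sqrt{k}) \cdot \exp(-\Omega(k))$ trade-off but from a self-consistent balance between the sub-exponential partition count, the exponential concentration for log-Gamma sums, and the polynomial (in $1/\delta$) multiplicity of bad siblings when a heavy subtree has fraction $1-\delta$ of the mass. Getting this balance right --- together with making the Pólya-urn / Gamma asymptotics uniform in $n$ so that the $\liminf$ in \eqref{eq:goal} passes to the limit --- is where I expect the bulk of the technical effort to lie.
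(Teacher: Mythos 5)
Your plan assembles essentially the same ingredients as the paper's proof: the re-rooting identity $\phi(v)\le\phi(1)\Leftrightarrow \prod_i S_i\ge\prod_i(n-S_i)$ along the path from the root (this is exactly \eqref{eq:eqphi1}), the P\'olya-urn limit representing the nested subtree fractions as products of independent uniforms, a Chernoff bound for the resulting sums of Gamma variables (Lemma \ref{lem:concgamma}), and the Hardy--Ramanujan bound \eqref{eq:HR} to count the candidates entering the union bound. The balance you describe between the $\exp(C\sqrt{s})$ partition count and the $\exp(-c\sqrt{s}\log s)$ concentration is precisely what produces the $\log(1/\epsilon)/\log\log(1/\epsilon)$ exponent. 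So the skeleton is right.

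The genuine gap is in the final assembly, and you have half-diagnosed it yourself. You correctly note that Markov on $\E|\mathcal{B}|$ cannot work, but the fixes you propose (higher moments, or a random-walk tail bound along the heavy chain) are not what is needed and are left entirely unworked. The paper's resolution is a \emph{containment} argument: index each vertex by its genealogical position $v=(j_1,\dots,j_\ell)$ (birth orders along the path from the root) and introduce the single statistic $s(v)=\sum_k(\ell+1-k)j_k$, which simultaneously (a) is the parameter governing the Gamma concentration bound in \eqref{eq:eqphi2}, and (b) is a partition-type quantity, so that $|\{v:s(v)\le 3S\}|\le 3S\exp(\pi\sqrt{2S})$ by \eqref{eq:HR}. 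One then shows $\P(\exists v: s(v)>3S \text{ and } \phi(v)\le\phi(1))\le\epsilon$, so that with probability $1-\epsilon$ the bad set is contained in the \emph{deterministic} finite set $\{v:s(v)\le 3S\}$, whose cardinality is the value of $K$. This first-moment bound is applied only to $\mathcal{B}\setminus\{s(v)\le 3S\}$, which has no trivial lower bound of $1$, so Markov's objection disappears. Two further technical points are absent from your sketch and are needed to make this union bound finite: the monotonicity $\phi(w)\le\max(\phi(u),\phi(v))$ for $w$ on the path from $u$ to $v$, which reduces the (a priori infinite) union over all positions with $s(v)>3S$ to a finite frontier $s(v)\in(S,3S]$ plus a high-birth-order case; and a stochastic domination argument ($\sum_{j>S}|T_{(v,j)\downarrow}|$ is dominated by $|T_{(v,S)\downarrow}|$) to fold the high-birth-order case back into the frontier. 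Your proposed geometric discretisation of size profiles does not obviously substitute for this, because the quantity counted by Hardy--Ramanujan in the paper is the genealogical label, not the realised subtree sizes.
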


Before going into the proof of the above result, we start with a technical lemma on the concentration of sums of Gamma random variables.

\begin{lemma} \label{lem:concgamma}
Let $j_1, \hdots j_{\ell} \in \N$, and $s = \sum_{k=1}^{\ell} k j_k$. For $k \in [\ell]$, let\footnote{We denote $\Ga(a,b)$ for the Gamma distribution with density proportional to $x^{a-1} \exp(-x/b) \mathbf{1}_{\left\{x > 0 \right\}}$.} $X_k \sim \Ga(j_k, k)$, with $X_1, \hdots, X_{\ell}$ being independent. Then for any $t \in (0,s)$,
$$\P\left(\sum_{k=1}^{\ell} X_k < t\right) \leq \exp\left(- \sqrt{\frac{s}{2}} \log \left(\frac{s}{e t}\right)\right) .$$
\end{lemma}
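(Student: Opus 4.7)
The natural approach is the Cramér--Chernoff bound applied to the lower tail. Using $\E e^{-\theta X_k} = (1+k\theta)^{-j_k}$ for $\theta > 0$, one has for every such $\theta$
$$\P\!\left(\sum_{k=1}^\ell X_k < t\right) \;\leq\; \exp\!\left(\theta t - \sum_{k=1}^\ell j_k \log(1+k\theta)\right),$$
and my plan is to choose $\theta$ so that the exponent in the right-hand side is at most $-\sqrt{s/2}\,\log(s/(et))$.

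My candidate choice is $\theta = n/t$, where $n := \sum_k j_k$; this is essentially the optimal exponent in the Poisson-type regime. In the relevant range $t \leq n$ one has $k\theta \geq 1$ for every $k \geq 1$, so the elementary inequality $\log(1+x) \geq \log x$ yields
$$\sum_k j_k \log(1+k\theta) \;\geq\; n \log(n/t) + \sum_k j_k \log k,$$
and therefore
$$-\log \P\!\left(\sum_k X_k < t\right) \;\geq\; n\log\!\left(\frac{n}{et}\right) + \sum_{k=1}^\ell j_k \log k.$$
The complementary range $t > n$ is subsumed by the observation that the target bound is already vacuous once $t \geq s/e$, as $\log(s/(et)) \leq 0$ there; so it suffices to work in the nontrivial regime.

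The core of the argument---and the step I expect to be the main obstacle---is now the purely deterministic inequality
$$n\log\!\left(\frac{n}{et}\right) + \sum_{k=1}^\ell j_k \log k \;\geq\; \sqrt{s/2}\,\log\!\left(\frac{s}{et}\right).$$
The key structural input is that $j_k \in \N$ forces $j_k \geq 1$ for every $k \in [\ell]$, whence $s = \sum_k k j_k \geq \ell(\ell+1)/2$, giving $\ell \leq \sqrt{2s}$ and consequently $n \geq s/\ell \geq \sqrt{s/2}$; without this pigeonhole-type lower bound on $n$, the exponent $\sqrt{s/2}$ would be unattainable. I would prove the inequality by rewriting it as
$$\bigl(n/(et)\bigr)^{n}\prod_{k=1}^\ell k^{j_k} \;\geq\; \bigl(s/(et)\bigr)^{\sqrt{s/2}}$$
and minimizing the left-hand side over $(j_k)$ subject to $j_k \geq 1$ and $\sum_k k j_k = s$. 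A Lagrange-multiplier analysis reveals two extremal families: the concentrated partition $j_1 = s$, where $n = s$ and the inequality is very loose, and the \emph{uniform} partition $j_k = 1$ for $k \in [\ell]$ with $\ell \approx \sqrt{2s}$, for which Stirling's formula $\log \ell! \approx \ell \log(\ell/e)$ gives a left-hand side of order $\sqrt{2s}\,\log(2s/(e^2 t))$ that matches the right-hand side $\sqrt{s/2}\,\log(s/(et))$ up to the expected constants. Verifying these extremes, interpolating by a convexity argument, and carefully tracking constants then completes the proof; the tightness of the bound at the ``maximally spread'' configuration is no accident and explains why $\sqrt{s/2}$ is the right exponent.
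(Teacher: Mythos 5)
Your Chernoff setup is the same as the paper's, but the way you choose and then relax the exponent leaves a genuine gap. The claim that ``the complementary range $t>n$ is subsumed by the observation that the target bound is already vacuous once $t\geq s/e$'' is false: $n=\sum_k j_k$ can be as small as roughly $\sqrt{2s}$ (take $j_k=1$ for all $k\in[\ell]$, so $n=\ell$ and $s=\ell(\ell+1)/2$), which is far below $s/e$, so the whole window $n<t<s/e$ is left uncovered and the bound there is not vacuous. Worse, in that window your method cannot be patched by just extending the same estimates: with $\theta=n/t<1$ the relaxation $\log(1+k\theta)\geq\log(k\theta)$ is negative (hence useless) for small $k$, and the deterministic inequality you reduce to actually fails. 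Concretely, for $\ell=200$, $j_k\equiv 1$, $s=20100$, $t=s/4=5025$, the target exponent is $\sqrt{s/2}\,\log(s/(et))\approx 38.7$, while
$n\log\bigl(n/(et)\bigr)+\log(200!)\approx -844.8+863.2\approx 18.4$,
so the inequality $n\log(n/(et))+\sum_k j_k\log k\geq\sqrt{s/2}\,\log(s/(et))$ is simply false there. (Even where it is true, your closing step --- ``verifying these extremes, interpolating by a convexity argument'' --- is a sketch, not a proof.)

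The paper avoids all of this by keeping $\lambda$ free and using a different relaxation: since $x\mapsto\log(1+x)/x$ is non-increasing, $\log(1+\lambda k)\geq\frac{k}{\ell}\log(1+\lambda\ell)$ for $k\leq\ell$, which collapses $\sum_k j_k\log(1+\lambda k)$ to $\frac{s}{\ell}\log(1+\lambda\ell)$ exactly (no case split on the size of $\lambda k$). Optimizing over $\lambda$ then gives the exponent $-\frac{s}{\ell}\log(s/(et))$ for every $t\in(0,s)$, and the pigeonhole bound $s\geq\ell^2/2$ --- the one correct structural observation you do make, just applied to $s/\ell$ rather than to $n$ --- finishes the proof. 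If you want to salvage your route, replace your relaxation by this one (or keep the exact Chernoff exponent and optimize it); as written, the argument does not establish the lemma.
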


\begin{proof}
Recall that for any $\lambda \geq 0$, one has
$$\E_{X \sim \Ga(a,b)} \exp(- \lambda X) = \frac{1}{(1+\lambda b)^a} .$$
Thus using Chernoff's method one obtains (using also that $x \mapsto \frac{\log(1+x)}{x}$ is non-increasing for the second inequality)
\begin{eqnarray*}
\P\left(\sum_{k=1}^{\ell} X_k < t\right) \leq \exp(\lambda t) \E \exp\left( - \lambda \sum_{k=1}^{\ell} X_k \right)
& = & \exp\left(\lambda t - \sum_{k=1}^{\ell} j_k \log(1+\lambda k)\right) \\
& \leq & \exp\left(\lambda t - \sum_{k=1}^{\ell} \lambda k j_k \frac{\log(1+\lambda \ell)}{\lambda \ell} \right) \\
& = & \exp\left(\lambda t - \frac{s}{\ell} \log(1+\lambda \ell)\right).
\end{eqnarray*}
Taking $\lambda = (s/t - 1) / \ell > 0$ yields
$$\P\left(\sum_{k=1}^{\ell} X_k < t\right) \leq \exp\left( - \frac{s \log \left(\frac{s}{e t}\right)}{\ell} \right).$$
It remains to observe that $s \geq \ell^2/2$ to conclude the proof.
\end{proof}

We also recall Erd\H{o}s' non-asymptotic version of the Hardy-Ramanujan formula on the number of partitions of an integer, \cite{Erd42},
\begin{equation} \label{eq:HR}
\left| \left\{(j_1, \hdots, j_{\ell}) \in \N^{\ell} \ \text{s.t.} \ \ell \in \N, \sum_{k=1}^{\ell} k j_k = s \right\}\right| \leq \exp \left(\pi \sqrt{\frac{2}{3} s} \right) .
\end{equation}

\begin{proof}
We decompose the proof into four steps. Denote $T=\UA(n)^{\circ}$, and let $S \geq 1$ be a value appropriately chosen later.

\medskip
\noindent
\textbf{Step 1.} We first introduce an alternative labeling of the vertices in $\UA(n)$. We define this labeling by induction as follows. The root is labeled with the empty set $\emptyset$. Node $(j_1, \hdots, j_{\ell}) \in \N^{\ell}$ with $\ell \in \N$ is defined to be the $j_{\ell}^{th}$ children (in birth order) of node $(j_1, \hdots, j_{\ell-1})$. Thus instead of labeling vertices with elements of $\N$, they are labeled with elements of $\N^* := \cup_{\ell=0}^{\infty} \N^{\ell}$. For any vertex $v \in \N^*$ we define $\ell(v)$ to be such that $v \in \N^{\ell(v)}$ (in other words $\ell(v)$ is the depth of $v$), and $s(v) = \sum_{k=1}^{\ell(v)} (\ell(v)+1-k) j_k$.

We observe the following important property: for any vertex $v$ such that $s(v) > 3 S$, one has either
\begin{itemize}
\item[(i)] there exists $u$ such that $s(u) \in (S, 3S]$ and $v \in (T, \emptyset)_{u \downarrow}$, or,
\item[(ii)] there exists $u$ such that $s(u) \leq S$ and $v \in (T, \emptyset)_{(u,j) \downarrow}$ for some $j > S$.
\end{itemize}
We prove this property by induction on the depth $\ell(v)$ of $v$. For $\ell(v)=1$, (ii) is clearly true with $u=\emptyset$. For $\ell(v) >1$, let $u$ be the parent of $v$. We now have three cases:
\begin{itemize}
\item[(a)] If $s(u) > 3 S$, then one can apply the induction hypothesis on $u$.
\item[(b)] If $s(u) \in (S, 3 S]$, then (i) is true.
\item[(c)] Finally if $s(u) \leq S$, then $v$ is the $j^{th}$ children of $u$ with $j > S$ (this uses the fact that $s((u,j)) \leq j + 2 S$, and $s(v) > 3S$), and thus (ii) is true.
\end{itemize}
This concludes the proof that either (i) or (ii) is always true. Also note that $\phi$ satisfies for any vertex $w$ on a path from $u$ to $v$, $\phi(w) \leq \max(\phi(u), \phi(v))$. Putting these facts together we proved that
\begin{align}
& \P\left(\exists \ v : s(v) > 3 S \ \text{and} \ \phi(v) \leq \phi(1) \right) \notag \\
& \leq \P\left(\exists \ v : s(v) \in (S, 3 S] \ \text{and} \ \phi(v) \leq \phi(1) \right) + \P\left( \exists \ v, j : s(v) \leq S, j > S, \ \text{and} \ \phi((v,j)) \leq \phi(1) \right). \label{eq:finiteunion}
\end{align}

\medskip
\noindent
\textbf{Step 2.} We show here that, after a union bound on $v$, the second term in \eqref{eq:finiteunion} is bounded by the first term. First observe that, for $v=(j_1, \hdots, j_{\ell})$, one has
\begin{equation} \label{eq:eqphi1}
\phi(v) \leq \phi(1) \Leftrightarrow \prod_{i=1}^{\ell(v)} |(T,\emptyset)_{(j_1, \hdots, j_i) \downarrow}| \geq \prod_{i=0}^{\ell(v)-1} |(T,v)_{(j_1, \hdots, j_i)  \downarrow}| = \prod_{i=1}^{\ell(v)} \left(n - |(T,\emptyset)_{(j_1, \hdots, j_i)  \downarrow}|\right).
\end{equation}
In particular, this implies that
\begin{align*}
& \exists \ j > S : \phi((v,j)) \leq \phi(1) \\
& \Rightarrow \prod_{i=1}^{\ell(v)} |(T,\emptyset)_{(j_1, \hdots, j_i) \downarrow}| \cdot \left(\sum_{j=S+1}^{+\infty} |(T,\emptyset)_{(v,j) \downarrow}| \right) \geq \prod_{i=1}^{\ell(v)} \left(n - |(T,\emptyset)_{(j_1, \hdots, j_i)  \downarrow}|\right) \cdot \left(n - \sum_{j=S+1}^{+\infty} |(T,\emptyset)_{(v,j) \downarrow}| \right).
\end{align*}
Now it is easy to see that the random variable $\sum_{j=S+1}^{+\infty} |(T,\emptyset)_{(v,j) \downarrow}|$ is stochastically dominated by $|(T,\emptyset)_{(v,S) \downarrow}|$, which, together with the two above displays, show that 
$$\P(\exists \ j > S : \phi((v,j)) \leq \phi(1)) \leq \P(\phi((v,S)) \leq \phi(1) ).$$
In particular, putting this together with \eqref{eq:finiteunion} and an union bound, one obtains:
\begin{equation} \label{eq:eqphi3}
\P\left(\exists \ v : s(v) > 3 S \ \text{and} \ \phi(v) \leq \phi(1) \right) \leq 2 \sum_{v : s(v) \in (S, 3S]} \P\left(\phi(v) \leq \phi(1) \right) .
\end{equation}

\medskip
\noindent
\textbf{Step 3.}
This is the main step of the proof, where we show that for any vertex $v$ with $s(v) \geq 10^{10}$,
\begin{equation} \label{eq:eqphi2}
\limsup_{n \to +\infty} \P(\phi(v) \leq \phi(1)) \leq 7 \exp\left(- 0.21 \sqrt{s(v)} \log(s(v)) \right) .
\end{equation}
Observe that the random vector $\left(\frac{1}{n}|(T,\emptyset)_{(j_1, \hdots, j_i)\downarrow}| \right)_{i=1, \hdots, \ell(v)}$ evolves according to a "pile" of P{\'o}lya urns, and thus using a standard argument it converges in distribution to the random vector $\left( \prod_{k=1}^i U_{j_k, k}\right)_{i=1, \hdots, \ell(v)}$,
where $U_{j,1}, U_{j,2}, \hdots$ are (independent) products of $j$ independent uniform random variables in $[0,1]$. In particular, by \eqref{eq:eqphi1} this gives
\begin{align*}
& \limsup_{n \to +\infty} \P(\phi(v) \leq \phi(1)) \\
& =  \P \left(\prod_{i=1}^{\ell(v)} \prod_{k=1}^i U_{j_k, k} \geq \prod_{i=1}^{\ell(v)} \left(1 - \prod_{k=1}^i U_{j_k, k}\right) \right) \\
& = \P \left(\prod_{i=1}^{\ell(v)} U_{j_i, i}^{\ell(v)+1-i} \geq \prod_{i=1}^{\ell(v)} \left(1 - \prod_{k=1}^i U_{j_k, k}\right) \right) \\
& \leq  \P \left(\prod_{i=1}^{\ell(v)} U_{j_i, i}^{\ell(v)+1-i} \geq \exp\left(- \frac{s(v)}{R}\right) \right) + \P\left(\prod_{i=1}^{\ell(v)} \left(1 - \prod_{k=1}^i U_{j_k, k}\right) \leq \exp\left(- \frac{s(v)}{R}\right) \right).
\end{align*}
where $R>0$ will be appropriately chosen later. Using Lemma \ref{lem:concgamma}, one directly obtains
$$ \P \left(\prod_{i=1}^{\ell(v)} U_{j_i, i}^{\ell(v)+1-i} \geq \exp\left(- \frac{s(v)}{R}\right) \right) \leq  \exp\left(- \sqrt{\frac{s(v)}{2}} \log \left(\frac{R}{e}\right)\right) .$$
Furthermore, using that $1-\exp(-x) \geq \frac{1}{2} \min(x,1)$ for any $x \geq 0$, one can check that 
$$\prod_{i=1}^{\ell(v)} \left(1- \prod_{k=1}^i U_{j_k, k}\right) \geq\frac{1}{2^{\ell(v)}} \prod_{i=1}^{\ell(v)} \min\left( \sum_{k=1}^i \log(1/U_{j_k,k}) , 1 \right) \geq \frac{1}{2^{\ell(v)}} X, $$
where $X$ is equal in distribution to $\prod_{i=1}^{+\infty} \min\left(\sum_{k=1}^i \log(1/U_{1,k}) , 1 \right)$. Thus, using Lemma \ref{lem:lucisaslave} in the Appendix, one gets,
$$\P \left(\prod_{i=1}^{\ell(v)} \left(1 - \prod_{k=1}^i U_{j_k, k}\right) \leq \exp\left(- \frac{s(v)}{R}\right) \right) \leq 6 \cdot 2^{\frac{\ell(v)}{4}} \exp\left(- \frac{s(v)}{4R}\right) \leq 6 \exp\left(\frac{\sqrt{s(v)}}{4} - \frac{s(v)}{4R}\right) ,$$
where we used $s(v) \geq \ell(v)/2$ for the second inequality. Taking $R= e \cdot s(v)^{0.3}$ easily concludes the proof of \eqref{eq:eqphi2}.

\medskip
\noindent
\textbf{Step 4.} Putting together \eqref{eq:eqphi3} and \eqref{eq:eqphi2} one obtains that for $S \geq 10^{10}$,
$$\limsup_{n \to +\infty} \P\left(\exists \ v : s(v) > 3 S \ \text{and} \ \phi(v) \leq \phi(1) \right) \leq 14 \cdot |\{v : s(v) \leq 3 S\}| \cdot \exp\left(- 0.21 \sqrt{S} \log(S) \right) .$$
Furthermore by \eqref{eq:HR} one clearly has $|\{v : s(v) \leq 3 S\}| \leq 3 S \exp(\pi \sqrt{2 S})$, and thus for $S \geq 10^{10}$ one can check that
$$\limsup_{n \to +\infty} \P\left(\exists \ v : s(v) > 3 S \ \text{and} \ \phi(v) \leq \phi(1) \right) \leq \exp\left(- \frac{1}{100} \sqrt{S} \log(S) \right) ,$$
which easily concludes the proof.

\end{proof}

\section{Root-finding algorithms for $\PA(n)$} \label{sec:pa}
In this section we investigate the preferential attachment model. We first analyze the simple root-finding algorithm defined in Section \ref{sec:simpleua}.
\begin{theorem} \label{th:psipa}
Let $K \geq C \frac{ \log^2(1/ \epsilon)}{\epsilon^4}$ for some numerical constant $C>0$. One has $\underset{n \to +\infty}{\mathrm{liminf}} \ \P(1 \in H_{\psi}(\PA(n)^{\circ})) \geq 1 - \epsilon$. 
\end{theorem}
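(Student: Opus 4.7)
The plan is to adapt the proof of Theorem~\ref{th:psi} for uniform attachment to the preferential attachment setting. As in the UA argument, I would start with the decomposition
\[
\P(1\notin H_\psi)\leq \P\bigl(\psi(1)\geq (1-\epsilon')n\bigr)+\P\bigl(\exists i > K:\psi(i)\leq (1-\epsilon')n\bigr)
\]
for a parameter $\epsilon'$ to be chosen of order $\epsilon^2$. The crucial structural input, which replaces the classical P\'olya urn convergence to $\mathrm{Dirichlet}(1,\ldots,1)$ used in the UA case, is the following analogue for PA: conditionally on $\PA(k)$, the normalized subtree sizes $\tfrac{1}{n}(|T_{1,k}|,\ldots,|T_{k,k}|)$ converge in distribution to $\mathrm{Dirichlet}(d_{\PA(k)}(1)/2,\ldots,d_{\PA(k)}(k)/2)$. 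This follows from the fact that the total degrees inside each $T_{j,k}$ evolve as a generalized P\'olya urn with initial weights given by the degrees in $\PA(k)$ and with increments of $2$ per step, since each new vertex contributes one unit of degree to itself and one to its neighbor.

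For the first term I would use $\psi(1)\leq \max(|T_{1,2}|,|T_{2,2}|)$; at $k=2$ the initial weights are $(1,1)$, so $|T_{i,2}|/n$ converges to the arcsine law $\mathrm{Beta}(1/2,1/2)$, whose tail satisfies $\P(\,\cdot\geq 1-\epsilon')=\tfrac{2}{\pi}\arcsin\sqrt{\epsilon'}\leq \sqrt{\epsilon'}$. Taking $\epsilon'$ of order $\epsilon^2$ makes this term at most $\epsilon/2$. For the second term, exactly as in the UA proof, for $i>K$ lying in $T_{k^*,K}$ the subtree of $(\PA(n),i)$ pointing toward $k^*$ has size at least $n-|T_{k^*,K}|$, so $\psi(i)\leq (1-\epsilon')n$ forces $|T_{k^*(i),K}|\geq \epsilon' n$. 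A union bound reduces the task to controlling $\sum_{k=1}^K \P(|T_{k,K}|/n\geq \epsilon')$, and each summand is, in the limit and conditional on $d_k:=d_{\PA(K)}(k)$, a $\mathrm{Beta}(d_k/2,K-1-d_k/2)$ tail. Via a Chernoff bound on the associated Gamma variable this is essentially at most $\exp\bigl(-K\epsilon'+(d_k/2)\log(2eK\epsilon'/d_k)\bigr)$.

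The main obstacle, and the source of the exponent $4$ in $1/\epsilon$, is controlling these tails uniformly in $k$. In the UA proof every initial weight equals $1$, so every summand is $(1-\epsilon')^{K-1}\approx e^{-K\epsilon'}$ and one merely needs $K\epsilon'\gtrsim \log(K/\epsilon)$. In PA the degrees $d_k$ are random with a heavy tail and, by standard concentration for preferential attachment, $\max_k d_k$ is of order $\sqrt{K}\log K$ with high probability. The Gamma tail at such a hub becomes genuinely small only once $K\epsilon'\gtrsim \sqrt{K}\log K$, which with $\epsilon'\asymp \epsilon^2$ forces $K\gtrsim \log^2(1/\epsilon)/\epsilon^4$. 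The last technical step would be to combine the Gamma tail bound with the typical PA degree profile, namely $|\{k:d_k=j\}|\asymp K/j^3$, condition on the high-probability event $\max_k d_k\leq C\sqrt{K}\log K$, and sum over $k$; after choosing the constants appropriately this yields $\sum_k \P(|T_{k,K}|/n\geq \epsilon')\leq \epsilon/2$ whenever $K\geq C\log^2(1/\epsilon)/\epsilon^4$, concluding the proof.
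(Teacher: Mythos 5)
Your proposal is correct and follows essentially the same route as the paper: the same decomposition with $\eta\asymp\epsilon^2$, the same conditional Dirichlet limit $\mathrm{Dir}(d_{\PA(K)}(1)/2,\ldots,d_{\PA(K)}(K)/2)$ for the normalized subtree sizes, the arcsine bound for $\psi(1)$, and the reduction of the second term to $\mathrm{Beta}(d_k/2,\,K-1-d_k/2)$ tails, with the $\epsilon^{-4}$ arising from the $\sqrt{K}$ scale of the large degrees. The only (harmless) deviation is in the last step: the paper collapses the union bound to the single worst term via the stochastic domination of $\sum_{j\neq k}|T_{j,K}|$ by $\sum_{j\geq 2}|T_{j,K}|$ and then invokes Janson's results on triangular P\'olya urns for $d_{\PA(K)}(1)$, whereas you sum the Beta tails over the degree profile under a max-degree event, which requires a quantitative tail bound on $\max_k d_k/\sqrt{K}$ that you should make explicit but which is standard.
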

On the contrary to the situation for uniform attachment it is not clear if the above theorem gives the correct order of magnitude in $\epsilon$: With probability at least $\Omega(1/\sqrt{K})$, the root is a leaf in $\PA(K)$, and thus for $H_{\psi}$ to be correct with probability at least $1-\epsilon$ one needs $K = \Omega(1/\epsilon^2)$. In other words there is a quadratic gap between this lower bound and the upper bound given by the above result. We also note that a similar bound to the one given by Theorem \ref{th:psipa} could be obtained by simply considering the vertices with largest degree (in fact it could even give a better polynomial dependency in $\epsilon$). However the proof of this latter statement is more involved technically, because degrees behave as triangular P{\'o}lya urns which have more complicated limiting distributions, see, e.g., \cite{Jan06}. On the contrary, subtree sizes are P{\'o}lya urns with diagonal replacement matrix, which allows for the (relative) simplicity of the following proof.

\begin{proof} 
Similarly to the proof of Theorem \ref{th:psi} we label vertices by chronological order, and for $1 \leq i \leq k$ we denote by $T_{i,k}$ the tree containing vertex $i$ in the forest obtained by removing in $\PA(n)$ all edges between vertices $\{1,\hdots, k\}$. Here the vector $2 (|T_{1,k}|, \hdots, |T_{k,k}|)$ follows a P\'olya urn with $k$ colors and replacement matrix $2 I_k$ (where $I_k$ is the $k\times k$ identity matrix), and thus using again a classical result one has the following convergence in distribution, conditionally on $\PA(k)$,
$$\frac{1}{n} (|T_{1,k}|, \hdots, |T_{k,k}|) \xrightarrow[n \to +\infty]{} \text{Dir}\left( \frac{d_{\PA(k)}(1)}{2}, \hdots, \frac{d_{\PA(k)}(k)}{2}\right) .$$
Let $\eta \in (0,1)$ and observe that
$$\P(1 \not\in H_{\psi}) \leq \P(\exists i > K : \psi(i) \leq \psi(1)) \leq \P(\psi(1) \geq (1-\eta) n) + \P(\exists i > K : \psi(i) \leq (1-\eta) n) .$$
Using that $\psi(1) \leq \max(|T_{1,2}|, |T_{2,2}| )$,
and since $|T_{1,2}|/n$ and $|T_{2,2}|/n$ are identically distributed and converge in distribution to a $\text{Beta}(1/2,1/2)$, one has
$$\limsup_{n \to +\infty} \P(\psi(1) \geq (1-\eta) n) \leq 2 \lim_{n \to +\infty} \P(|T_{1,2}| \geq (1-\eta) n ) = \frac{2}{\pi} \mathrm{arcsin}(\sqrt{\eta}) \leq \sqrt{\eta} .$$
On the other hand, for any $i > K$, $\psi(i) \geq \min_{1 \leq k \leq K} \sum_{j=1, j \neq k}^K |T_{j,K}|$,
and $\frac{1}{n} \sum_{j=1, j \neq k}^K |T_{j,K}|$ is stochastically lower bounded by $\frac{1}{n} \sum_{j=2}^K |T_{j,K}|$ which converges in distribution to a 
$$\text{Beta}\left(K-1 - \frac{d_{\PA(K)}(1)}{2} ,\frac{d_{\PA(K)}(1)}{2}\right) .$$
Thus we have
\begin{eqnarray*}
\limsup_{n \to +\infty} \P(\exists i > K : \psi(i) \leq (1-\eta) n) & \leq & \lim_{n \to +\infty} \P\left(\exists 1 \leq k \leq K : \sum_{j=1, j \neq k}^K |T_{j,K}| \leq (1-\eta) n \right) \\
& \leq & K \ \P\left( \text{Beta}\left(K-1 - \frac{d_{\PA(K)}(1)}{2} ,\frac{d_{\PA(K)}(1)}{2}\right)  \leq 1-\eta\right) .
\end{eqnarray*}
Using properties of Beta distributions and triangular P\'olya urns (in particular [Example 3.1, \cite{Jan06}]) one can show that this last term is smaller than $\eta$ for $K \geq C \frac{ \log(1/ \eta)}{\eta^2}$. The proof is thus concluded by taking $\eta = \epsilon^2$. 
\end{proof}

Next we prove a general impossibility result. On the contrary to the proof of Theorem \ref{th:zeta}, here we do not use the structure of the maximum likelihood estimator, as a simple symmetry argument suffices. 

\begin{theorem} \label{th:xi}
There exists $c > 0$ such that for $\epsilon \in (0,1)$, any procedure that satisfies \eqref{eq:goal} in the preferential attachment model must have $K \geq c / \epsilon$.
\end{theorem}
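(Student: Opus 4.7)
The plan is a pure symmetry argument that bypasses the likelihood machinery of Theorem~\ref{th:zeta}. The underlying lemma is elementary: if, conditional on some event $E$, there exist $m$ distinct positions $v_{i_1}=v_1,v_{i_2},\ldots,v_{i_m}$ of $\PA(n)^\circ$ whose joint law together with $\PA(n)^\circ$ is invariant, given $E$, under permutations of the tuple, then for every mapping $H$ with $|H|=K$,
\[
m\cdot\P(v_1\in H\mid E) \;=\; \sum_{j=1}^m \P(v_{i_j}\in H\mid E) \;=\; \E\bigl[|H\cap\{v_{i_1},\ldots,v_{i_m}\}|\bigm|E\bigr] \;\le\; K,
\]
so $\P(1\notin H) \ge \P(E)(1-K/m)$. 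The task is to exhibit an event with $\P(E)(1-K/m) \gtrsim 1/K$.

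The construction of $E$ exploits the fact that the PA transition rule commutes with relabelings of the existing vertices. Consequently, any automorphism of the early tree $\PA(k)$ fixing vertex $2$ (and hence the initial edge $\{1,2\}$) lifts to an exchangeability of the corresponding subtrees in $\PA(n)$, and so to exchangeability of the corresponding positions of $\PA(n)^\circ$. The cleanest candidate for $E$ is the event that $\PA(k)$ is a star centred at vertex $2$, yielding $m=k-1$ exchangeable leaves (including vertex $1$). More broadly, any event pinning vertex $1$ to being one of many leaf-children of vertex $2$ at some early step $k$ produces an orbit of vertex $1$ of size $m$ under a subgroup of automorphisms of $\PA(k)$ fixing vertex $2$.

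The main obstacle is the trade-off: one must choose the event so that $\P(E)$ and $m$ balance to give $\P(E)(1-K/m)\gtrsim 1/K$. The purely star-shaped event is too restrictive --- it has probability $2^{-(k-2)}$ and delivers only a logarithmic lower bound. A looser event, conditioning only on vertex $2$ accumulating $m=\Theta(K)$ leaf-neighbors among labels $\{1,3,\ldots,k\}$ at a time $k\asymp K$, should achieve $\P(E)$ of the right polynomial order. Its probability can be estimated via the continuous-time Yule embedding of the PA process, tracking degree growth at vertex $2$ together with the event that none of these neighbors have themselves acquired children. With such an $E$ in hand, choosing $m=2K$ yields $\P(1\notin H)\geq \P(E)/2 \gtrsim 1/K$, and hence $K\ge c/\eps$. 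The exchangeability passes to the $n\to\infty$ limit immediately, since the PA continuation kernel from step $k$ onward is equivariant under any relabeling of $\PA(k)$ fixing vertex $2$.
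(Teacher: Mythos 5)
Your high-level strategy is exactly the paper's: exhibit an event of non-negligible probability on which vertex $1$ sits in an orbit of $m\gg K$ exchangeable (isomorphic) positions, and conclude $\P(1\notin H)\ge \P(E)(1-K/m)$. The exchangeability lemma and the idea of using leaf-children of vertex $2$ are both correct and match the paper. The problem is the quantitative construction of $E$.

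The specific event you propose --- vertex $2$ accumulating $m=\Theta(K)$ leaf-neighbours among the first $k\asymp K$ labels --- does not have probability $\gtrsim 1/K$; it has probability $e^{-\Theta(K)}$. The bottleneck is the degree of vertex $2$: at time $k$ it is typically $\Theta(\sqrt{k})$, and $d_{\PA(k)}(2)/\sqrt{k}$ has a limiting distribution with Gaussian-type upper tail, so asking for degree $\Theta(k)$ at time $k$ is a large deviation of the same exponential order as the star event you correctly rejected. Loosening ``star'' to ``many leaf-children'' does not help while $k\asymp K$, because you still need $d(2)\ge 2K$ at time $k\asymp K$. Conversely, if you only take the $\Theta(\sqrt{k})$ leaf-children that vertex $2$ typically has at time $k\asymp K$, then $m\asymp\sqrt{K}<K$ and your bound $1-K/m$ is vacuous. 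The fix, which is what the paper does, is to let the tree grow to size $n\asymp K^2\asymp 1/\epsilon^2$: there vertex $2$ has $\Theta(\sqrt{n})=\Theta(K)$ leaf-neighbours with \emph{constant} conditional probability, and the only genuinely unlikely requirement is that vertex $1$ itself is still a leaf at time $n$, which costs $\Theta(1/\sqrt{n})=\Theta(\epsilon)$. This gives $\P(E)=\Theta(\epsilon)$ and $m=\Theta(1/\epsilon)$, hence $\P(1\notin H)\ge\Theta(\epsilon)\cdot(1-K\epsilon/c')>\epsilon$ whenever $K<c/\epsilon$. So you should re-balance your trade-off by moving the time horizon from $K$ to $K^2$; with that change your argument goes through and coincides with the paper's.
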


\begin{proof}
As we observed in the proof of Theorem \ref{th:zeta}, the probability of error for the optimal procedure is non-decreasing with $n$, so it suffices to show that the optimal procedure must have a probability of error of at least $\epsilon$ for some finite $n$. 

We show that there is some finite $n$ for which, with probability at least $2\epsilon$, $1$ is isomorphic to at least $2 c /\epsilon$ vertices in $\PA(n)$. This clearly implies a probability of error of at least $\epsilon$ for any procedure that outputs less than $c/\epsilon$ vertices. To simplify the proof we now use the $\Theta$ notation. First observe that the probability that $1$ is a leaf is $\Theta(1/\sqrt{n})$, and thus for $n = \Theta(1/\epsilon^2)$ this happens with probability at least $\Theta(\epsilon)$. Furthermore it is an easy exercise\footnote{Simply recall that $d_{\PA(n)}(1)/\sqrt{n}$ converges in distribution (this follows from \cite{Jan06}) and at least a constant fraction of the vertices in $\{n/2,\hdots, n\}$ are leaves.} to verify that, conditioned on $1$ being a leaf, with probability at least $\Theta(1)$, $2$ is connected to $\Theta(\sqrt{n}) = \Theta(1/\epsilon)$  leaves, which are then isomorphic to $1$.
\end{proof}
For sake of completeness, we observe that the maximum likelihood estimator in the preferential attachment model is obtained by minimizing the function
\begin{equation} \label{eq:zeta}
\xi_T(u) = \frac{\overline{\mathrm{Aut}}(u,T)}{d_T(u)}{\prod_{v \in V(T) \setminus \cL((T,u))} \left( |(T,u)_{v \downarrow}| \cdot \mathrm{Aut}(v, (T,u)) \right)} .
\end{equation}
This is a consequence of the following observation.
\begin{proposition} \label{prop:countingPA}
Let $T$ be an unlabeled rooted tree with $n$ vertices, then
$$|\{t \ \text{plane-oriented recursive tree} \ : t^{\circ} = T\}| = \frac{n! \cdot d_T(\emptyset)! \cdot \prod_{v \in V(T) \setminus \{\emptyset\}} (d_T(v)-1)!}{\prod_{v \in V(T) \setminus \cL(T)} \left( |T_{v \downarrow}| \cdot  \mathrm{Aut}(v, T) \right)} .$$
\end{proposition}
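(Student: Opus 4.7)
My plan is to reduce the PORT count directly to the recursive tree count supplied by Proposition \ref{prop:counting}, by separately accounting for the planar orderings. The key observation is that, by the definition in Section \ref{sec:notation}, a plane-oriented recursive tree is nothing more than a recursive tree together with an ordering of the children at each vertex. Hence every PORT $t$ with $t^{\circ}=T$ corresponds bijectively to a pair $(t',\sigma)$, where $t'$ is a recursive tree with $(t')^{\circ}=T$ and $\sigma$ is a choice of linear ordering of the children at each non-leaf vertex of $t'$.

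Next I would count the number of valid orderings $\sigma$ compatible with a given $t'$. A vertex $v$ in the rooted tree $T$ has exactly $d_T(\emptyset)$ children when $v=\emptyset$ and exactly $d_T(v)-1$ children otherwise, so the number of available orderings is
$$d_T(\emptyset)! \prod_{v \in V(T) \setminus \{\emptyset\}} (d_T(v)-1)!.$$
Crucially this factor depends only on the unlabeled shape $T$ and not on the particular labeling carried by $t'$. Combining this with the bijection above gives
$$|\{t \text{ PORT}: t^{\circ}=T\}| = \left(d_T(\emptyset)! \prod_{v \in V(T) \setminus \{\emptyset\}} (d_T(v)-1)!\right) \cdot |\{t' \text{ recursive tree}: (t')^{\circ}=T\}|.$$
Substituting the formula of Proposition \ref{prop:counting} into the second factor yields the claimed identity at once.

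I do not anticipate any real obstacle. The only two points requiring verification are both immediate: (a) that the map $t \mapsto (t',\sigma)$ is a bijection, which follows directly from the very definition of a PORT as a labeled tree enriched with child-orderings; and (b) that leaves contribute a trivial factor $0!=1$, so it is harmless that the product $\prod_{v \in V(T) \setminus \{\emptyset\}} (d_T(v)-1)!$ in the statement ranges over all non-root vertices rather than only non-leaf non-root vertices.
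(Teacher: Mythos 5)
Your proof is correct and follows essentially the same route as the paper's: decompose each plane-oriented recursive tree as a recursive tree plus a choice of child-orderings, note that the root has $d_T(\emptyset)$ children while every other vertex $v$ has $d_T(v)-1$, and multiply the resulting factor $d_T(\emptyset)!\prod_{v\neq\emptyset}(d_T(v)-1)!$ by the count from Proposition \ref{prop:counting}. Your added remarks on the bijection and on leaves contributing $0!=1$ are fine but not needed beyond what the paper already states.
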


\begin{proof}
Recall that a plane-oriented recursive tree is a recursive tree together with an ordering of the children of each vertex. For the root there are $d_T(\emptyset)!$ possible orderings of the children, while for any other vertex $v \neq \emptyset$ there are only $(d_T(v)-1)!$ possible orderings. Together with Proposition \ref{prop:counting} this immediately yields the above formula.
\end{proof}

\section{Open problems} \label{sec:op}
\begin{enumerate}
\item Our results leave gaps for both uniform attachment and preferential attachment models. In particular, is it possible to find a procedure with $K \leq \exp(c \sqrt{\log(1/\epsilon)})$ (respectively $K \leq c / \epsilon$) in the uniform attachment (respectively preferential attachment) model?
\item In the seeded models of \cite{bubeck2014influencePAseed, BEMR14}, how large does $K$ need to be to certify that the obtained set of vertices contains the seed with probability at least $1-\epsilon$?
\item In Section \ref{sec:notation} we introduced the more general non-linear preferential attachment model $\cT_{\alpha}(n)$. What can be said for this model?
\item What about growing graphs instead of trees? For instance one can investigate root-finding algorithms in the model $\cT_{\alpha, m}(n)$, where each arriving new node sends $m$ independent edges according to the same rule than in $\cT_{\alpha}(n)$.
\item If one has to output only a single vertex, what is the best possible success probability? For instance our results imply a lower bound of at least $1\%$ success probability in the uniform attachment model (the strategy is to pick at random a vertex in $H_{\psi}$ with $K=35$).
\end{enumerate}

\section*{Acknowledgements}

We thank Justin Khim and Po-Ling Loh for correcting a mistake in the original proof of Theorem \ref{th:psipa}. We also thank Mikl\'os Z.\ R\'acz for helpful discussions.

\section*{Appendix}
\begin{lemma}\label{lem:lucisaslave}
Let $E_1, E_2, \hdots$ be an i.i.d. sequence of exponential random variables (of parameter $1$), and let 
$$X = \prod_{i=1}^{+\infty} \min\left(\sum_{k=1}^i E_k, 1\right) .$$
Then for any $t >0$,
$$\P(X \leq t) \leq 6 t^{1/4} .$$
\end{lemma}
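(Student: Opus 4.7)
The plan is to use the Poisson-process interpretation of the $E_k$ to collapse the infinite product $X$ to a finite product of i.i.d.\ uniforms, and then apply Markov's inequality to a negative moment. Since the $E_k$ are nonnegative, the partial sums $S_i := \sum_{k=1}^i E_k$ are nondecreasing, and once some $S_i$ reaches $1$ all subsequent factors $\min(S_j,1)$ equal $1$. Setting $N := \min\{i \geq 1 : S_i \geq 1\}$, one therefore has $X = \prod_{i=1}^{N-1} S_i$, with the empty product equal to $1$ when $N=1$.

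Next I would invoke the standard fact that $S_1,S_2,\ldots$ are the arrival times of a unit-rate Poisson process: hence $N-1$ is the number of arrivals in $[0,1]$, which is $\mathrm{Poisson}(1)$-distributed, and conditionally on $N-1=m$ the times $(S_1,\ldots,S_m)$ are distributed as the order statistics of $m$ i.i.d.\ $\mathrm{Uniform}[0,1]$ variables. Since $\prod_{i=1}^m S_i$ is symmetric in its factors, this yields, conditionally on $N-1=m$, the distributional identity $X \stackrel{d}{=} \prod_{i=1}^m U_i$ with $U_1,\dots,U_m$ i.i.d.\ $\mathrm{Uniform}[0,1]$.

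Using $\E[U_i^{-\lambda}] = 1/(1-\lambda)$ for $\lambda \in (0,1)$, I would then compute the negative moment
\[
\E[X^{-\lambda}] = \sum_{m=0}^{\infty} \frac{e^{-1}}{m!}\,(1-\lambda)^{-m} = \exp\!\left(\frac{\lambda}{1-\lambda}\right),
\]
and conclude by Markov's inequality: $\P(X \leq t) = \P(X^{-\lambda} \geq t^{-\lambda}) \leq t^{\lambda}\exp(\lambda/(1-\lambda))$. Choosing $\lambda = 1/4$ yields $\P(X \leq t) \leq t^{1/4} e^{1/3} \leq 6\, t^{1/4}$, and for $t \geq 1$ the bound is trivial since $X \leq 1$ almost surely.

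There is no real obstacle here: the Poisson/order-statistics representation is classical, and the moment computation reduces to a one-line generating-function identity. The constant $6$ is very loose compared to what this optimized Markov bound produces ($e^{1/3} < 1.4$), so the only things worth mentioning explicitly are that $X>0$ almost surely (immediate from $\P(E_k=0)=0$) and that $\E[X^{-\lambda}]<\infty$ for $\lambda<1$, which the computation above verifies.
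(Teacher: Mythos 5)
Your proof is correct, and it is a cleaner route than the one in the paper, though built from the same two ingredients. The paper first reduces to the event that the product has exactly $\ell$ nontrivial factors, bounds $\P\left(\sum_{k\le \ell}E_k\le 1\right)\le 1/\ell!$ and $\P\left(\prod_{i\le\ell}U_i\le t\right)\le 2^\ell\sqrt t$ \emph{separately} (the latter by a negative-moment Markov bound with exponent $1/2$, after representing the normalized partial sums as uniform order statistics), and then stitches the two bounds together with $\min(a,b)\le\sqrt{ab}$ and a sum over $\ell$; this is where the exponent degrades from $1/2$ to $1/4$ and the constant $6$ appears. You instead use the conditional-uniformity property of the Poisson process to identify the law of $X$ exactly as a $\mathrm{Poisson}(1)$-mixture of products of i.i.d.\ uniforms, which lets you compute $\E[X^{-\lambda}]=\exp(\lambda/(1-\lambda))$ in closed form and apply Markov's inequality once, with a single exponent $\lambda=1/4$ chosen at the end. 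This avoids the union bound and the $\sqrt{ab}$ interpolation entirely and gives the sharper constant $e^{1/3}<1.4$ (indeed your computation shows $\P(X\le t)\le e^{\lambda/(1-\lambda)}t^{\lambda}$ for every $\lambda\in(0,1)$, so one could even push the exponent toward $1$ at the cost of the constant). The only points that need care — that $X>0$ almost surely so that $X^{-\lambda}$ is well defined, and that the event $\{N-1=m\}$ is exactly the event of $m$ arrivals in $[0,1]$ (up to the null event $S_i=1$) — you have addressed.
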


\begin{proof}
First observe that, almost surely, $X$ is a finite product, and thus one can write for $t \in (0,1)$,
\begin{eqnarray}
\P(X \leq t) & = & \P\left(\exists \ \ell : \sum_{k=1}^{\ell+1} E_k > 1, \ \text{and} \  \sum_{k=1}^{\ell} E_k \leq 1, \ \text{and} \ \prod_{i=1}^{\ell} \left(\sum_{k=1}^i E_k\right) \leq t \right) \notag \\
& \leq & \sum_{\ell=1}^{+\infty} \min\left(\P\left(\sum_{k=1}^{\ell} E_k \leq 1\right) ,  \P\left(\sum_{k=1}^{\ell+1} E_k > 1, \ \text{and} \  \ \prod_{i=1}^{\ell} \left(\sum_{k=1}^i E_k\right) \leq t \right) \right) . \label{eq:s1}
\end{eqnarray}
One has
$$\P\left(\sum_{k=1}^{\ell} E_k \leq 1\right) = \int_0^1 \frac{x^{\ell-1} \exp(-x)}{(\ell-1)!} dx \leq \frac{1}{\ell!}.$$
Furthermore we show below that 
\begin{equation} \label{eq:s2}
\P\left(\sum_{k=1}^{\ell+1} E_k > 1, \ \text{and} \  \ \prod_{i=1}^{\ell} \left(\sum_{k=1}^i E_k\right) \leq t \right) \leq 2^\ell \sqrt{t} .
\end{equation}
The two above displays together with \eqref{eq:s1} concludes the proof, since they imply (with $\min(a,b) \leq \sqrt{ab}$):
$$\P(X \leq t) \leq \sum_{\ell=1}^{+\infty} \sqrt{\frac{2^{\ell} \sqrt{t}}{\ell!}} \leq 6 t^{1/4} .$$ 
To prove \eqref{eq:s2} we first write
$$\prod_{i=1}^{\ell} \left(\sum_{k=1}^i E_k\right) = \left(\sum_{k=1}^{\ell+1} E_k \right)^{\ell} \prod_{i=1}^{\ell} \left(\frac{\sum_{k=1}^i E_k}{\sum_{k=1}^{\ell+1} E_k}\right). $$
Now observe that the vector $\left({\sum_{k=1}^i E_k} / {\sum_{k=1}^{\ell+1} E_k}\right)_{i=1,\hdots,\ell}$ is equal in law to $(U_{(i)})_{i=1,\hdots, \ell}$ where $U_1, \hdots, U_{\ell}$ is an i.i.d. sequence of uniform random variables in $[0,1]$ and $(U_{(i)})$ is an increasing rearrangement of $(U_i)$. Thus one has
$$\P\left(\sum_{k=1}^{\ell+1} E_k > 1, \ \text{and} \  \ \prod_{i=1}^{\ell} \left(\sum_{k=1}^i E_k\right) \leq t \right) \leq \P\left(\prod_{i=1}^{\ell} U_i \leq t\right) \leq \E \left(\frac{\sqrt{t}}{\sqrt{\prod_{i=1}^{\ell} U_i}}\right) \leq  2^{\ell} \sqrt{t} ,$$
which concludes the proof.
\end{proof}

\bibliographystyle{plainnat}
\bibliography{bib}

\end{document}